\documentclass[11pt]{article}
\usepackage{graphicx}
 
\usepackage{amsthm, amsmath, amssymb, tikz, bm}
\usepackage{mathtools}
\usepackage{xifthen}
\usepackage{comment}

\usepackage{thm-restate}
\theoremstyle{plain}
\usepackage[margin=1.1in]{geometry} 

\usepackage[shortlabels]{enumitem}
\usepackage{todonotes}
\usepackage[colorlinks=true, linkcolor=blue,citecolor=blue,
urlcolor=blue]{hyperref}
\usepackage[noabbrev,capitalise]{cleveref}
\usetikzlibrary{calc,shapes, backgrounds}
\usetikzlibrary[patterns]
\usetikzlibrary{arrows.meta}
\usetikzlibrary{decorations.markings}

\newtheorem{theorem}{Theorem}[section]
\newtheorem{lemma}[theorem]{Lemma}
\newtheorem{corollary}[theorem]{Corollary}

\newtheorem{observation}[theorem]{Observation}

\newcommand{\dss}{\displaystyle\sum}

\newcommand{\lp}{\left (}
\newcommand{\rp}{\right )}

\DeclarePairedDelimiter{\ceil}{\lceil}{\rceil}
\DeclarePairedDelimiter{\floor}{\lfloor}{\rfloor}

\makeatletter
               {\list{}{\leftmargin=0pt % <------- Adjust this length
                        \labelwidth\z@ \itemindent-\leftmargin
                        }}%
               {\endlist}
\makeatother

%Ordiam Macro

\title{On tight $(k,\ell)$-stable graphs}

\author{
Xiaonan Liu \thanks{Department of Mathematics, Vanderbilt University, Nashville, TN, 37240
({\tt xiaonan.liu@vanderbilt.edu}).}
\and
Zi-Xia Song \thanks{Department of Mathematics, University of Central Florida, Orlando, FL, 32816
({\tt Zixia.Song@ucf.edu}).   Supported by  NSF grant    DMS-2153945.}
\and
Zhiyu Wang \thanks{Department of Mathematics, Louisiana State University, Baton Rouge, LA, 70803
({\tt zhiyuw@lsu.edu}).}
}
\begin{document}

\maketitle

\begin{abstract}
For integers $k>\ell\ge0$, a graph $G$ is $(k,\ell)$-stable if  $\alpha(G-S)\geq \alpha(G)-\ell$ for every    
$S\subseteq V(G)$ with $|S|=k$. A recent result of Dong and Wu [SIAM J.
Discrete Math., 36 (2022) 229--240] shows that every $(k,\ell)$-stable 
graph $G$  satisfies $\alpha(G) \le  \lfloor ({|V(G)|-k+1})/{2}\rfloor+\ell$.  A $(k,\ell)$-stable graph $G$   is   tight if $\alpha(G) = \lfloor ({|V(G)|-k+1})/{2}\rfloor+\ell$; and  $q$-tight for some integer $q\ge0$ if $\alpha(G) = \lfloor ({|V(G)|-k+1})/{2}\rfloor+\ell-q$.
In this paper, we first prove  that for all $k\geq 24$, the only tight $(k, 0)$-stable graphs are $K_{k+1}$ and  $K_{k+2}$, answering a question of Dong and Luo [arXiv: 2401.16639]. We then prove that  for all nonnegative integers $k, \ell, q$ with $k\geq 3\ell+3$, every $q$-tight $(k,\ell)$-stable graph has at most  $k-3\ell-3+2^{3(\ell+2q+4)^2}$ vertices, answering a question of Dong and Luo in the negative. 
\end{abstract}

\section{Introduction}
 All graphs   considered in this paper  are finite, simple, and undirected.   Give a graph $G$,    we  use $V(G)$ to denote the vertex set, $E(G)$ the edge set,    $\delta(G)$ the minimum degree, $\Delta(G)$ the maximum degree,  $\alpha(G)$ the independence number,  and $\omega(G)$ the clique number of $G$.
The resilience of graph properties  is a fundamental question in graph theory. 
Motivated by studies on the Erd\H{o}s-Rogers function, Dong and Wu~\cite{DW22} investigated the resilience of graph independence number with respect to removing vertices. 
Given a graph $G=(V,E)$ and a subset $S$ of $V(G)$, let $G-S$ be the induced subgraph of $G$ with vertex set  $V(G)- S$. For every $k> \ell\geq0$, we say that a graph $G$ is \textit{$(k,\ell)$-stable} if for every $S\subseteq V(G)$ with $|S|=k$, $\alpha(G-S)\geq \alpha(G)-\ell$.  \medskip

 \cref{thm:independence_num_Dong_Wu} is a result of Dong and  Wu~\cite{DW22} on   the independence number of $(k,\ell)$-stable graphs; it  was used to  obtain new values of the Erd\H{o}s-Rogers function (see \cite[Theorem 2]{DW22}). 
%studied the maximum independence number of $(k,\ell)$-stable graphs and used it to obtain new values of the \textit{Erd\H{o}s-Rogers} function, $f_{s,s+t}(n)$, defined as 
%$$f_{s,s+t}(n):= \textrm{min\{max\{$|S|$: $S \subseteq V(G)$ and the induced subgraph $G[S]$ contains no $K_s$}\}\},$$
%where the minimum is taken over all $K_{s+t}$-free graphs $G$ on $n$ vertices. In particular, they \cite{DW22} showed that $f_{s,s+t}(n) = n-t$ for every integers $s,t, n$ such that $s>\floor{(n-t+1)/2}$ and $s+t\leq n+1$. To prove the above result, Dong and Wu~\cite{DW22} showed the following upper 
%proved the following result  on   the independence number of $(k,\ell)$-stable graphs.

\begin{theorem}[\cite{DW22}]\label{thm:independence_num_Dong_Wu}
For all integers $k>\ell \geq 0$, every  $(k,\ell)$-stable graph on $n$ vertices satisfies 
\[\alpha(G)\leq \floor*{\frac{n-k+1}{2}}+\ell.\]
 
\end{theorem}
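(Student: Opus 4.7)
The plan is to prove the theorem by induction on $k$. The key observation is that if a vertex $v \in V(G)$ satisfies $\alpha(G-v) = \alpha(G)$, then $G - v$ inherits a slightly weaker stability: for any $S \subseteq V(G-v)$ with $|S| = k - 1$, applying the $(k, \ell)$-stability of $G$ to the $k$-set $S \cup \{v\}$ gives
\[
\alpha\bigl((G-v) - S\bigr) = \alpha\bigl(G - (S \cup \{v\})\bigr) \;\geq\; \alpha(G) - \ell \;=\; \alpha(G-v) - \ell,
\]
so $G - v$ is $(k-1, \ell)$-stable. Provided $k - 1 > \ell$, the inductive hypothesis applied to $G-v$ on $n-1$ vertices yields
\[
\alpha(G) = \alpha(G-v) \;\leq\; \left\lfloor \frac{(n-1)-(k-1)+1}{2} \right\rfloor + \ell \;=\; \left\lfloor \frac{n-k+1}{2} \right\rfloor + \ell,
\]
which is exactly the desired bound.

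To supply such a vertex $v$, I would observe that an edgeless graph on $n \geq k$ vertices cannot be $(k, \ell)$-stable, since $\alpha$ would drop by $k > \ell$ after removing any $k$ vertices. Hence $G$ contains an edge, so not every vertex lies in every maximum independent set, and any vertex $v$ missed by some maximum independent set satisfies $\alpha(G-v) = \alpha(G)$. The degenerate range $n < k$ is vacuous and can be dealt with by a direct check.

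The main obstacle is the base case $k = \ell + 1$. Here the reduction above produces a $(\ell, \ell)$-stable graph, a vacuous condition that yields no information from the inductive hypothesis, so a direct argument is required. My proposed approach is to fix a maximum independent set $I$ with $|I| = \alpha$, choose $S \subseteq I$ with $|S| = \ell + 1$, and use $(\ell+1, \ell)$-stability to extract an independent set $J \subseteq V(G) \setminus S$ with $|J| \geq \alpha - \ell$. Since $|J \cap I| \leq |I \setminus S| = \alpha - \ell - 1$, the set $J \setminus I$ is a nonempty independent subset of $V \setminus I$. The hard step is to boost this to $|V \setminus I| \geq \alpha - \ell$, equivalently $n \geq 2\alpha - \ell$. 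I would try to achieve this by analyzing the bipartite graph between $I \setminus J$ and $J \setminus I$: every vertex of $(I \setminus J) \setminus S$ must have a neighbor in $J \setminus I$ (by the maximality of $J$ in $G - S$), while every vertex of $J \setminus I$ must have a neighbor in $I$ (by the maximality of $I$), and combining these two matching-type conditions should produce the required surplus of vertices outside $I$.

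As a fallback for the base case, one can further induct on $\ell$ and reduce to the $(1, 0)$-stable case ($\ell = 0, k = 1$), where the condition is that the intersection of all maximum independent sets is empty. This case should be attackable by a matching-type argument (for instance, by showing $G$ must contain a matching of size at least $\alpha$, which combined with $\alpha(G) \leq n - \nu(G)$ forces $\alpha \leq n/2$). Either route makes the base case the delicate part, with the induction on $k$ handling everything else cleanly.
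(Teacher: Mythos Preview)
The paper does not prove this theorem; it is quoted from Dong and Wu~\cite{DW22} and used as a black box throughout, so there is no in-paper argument to compare against.

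Your outer induction on $k$ is correct: once $G$ has an edge, some vertex $v$ misses a maximum independent set, $G-v$ is $(k-1,\ell)$-stable, and the bound transfers verbatim. The gap is exactly where you place it, the base case $k=\ell+1$, and neither of your two sketches closes it. In the first, a single choice of $S\subseteq I$ with $|S|=\ell+1$ produces only $|J\setminus I|\ge 1$; the bipartite constraints you list between $(I\setminus J)\setminus S$ and $J\setminus I$ control those two sets relative to each other but give no lower bound on $|V\setminus I|$, and varying $S$ does not obviously manufacture $\alpha-\ell$ distinct vertices outside $I$. In the fallback, you have specified neither the $\ell$-reduction step (the natural move is to delete a vertex lying in \emph{every} maximum independent set, which drops both parameters by one; if no such vertex exists then $G$ is already $(1,0)$-stable and the stronger bound $\alpha\le\lfloor n/2\rfloor$ already suffices) nor a proof of the $(1,0)$ base case itself. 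Showing that a $(1,0)$-stable graph carries a matching of size $\alpha$ --- equivalently, that Hall's condition holds in the bipartite graph between a maximum independent set and its complement --- is a genuine lemma, not a formality, and ``should be attackable'' is not an argument. Until that lemma (or an equivalent) is actually established, the proof is incomplete.
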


A $(k,\ell)$-stable graph $G$ is   \textit{tight} if $ \alpha(G)= \floor*{\frac{n-k+1}{2}}+\ell$. Dong and Wu~\cite{DW22} proved that  there exists a tight $n$-vertex $(k,\ell)$-stable graph for every $n$, provided that  $k=\ell+1$ or  $k=\ell+2$. Very recently, Dong and  Luo~\cite{DL24} studied the structure of tight $(k,0)$-stable graphs. In particular, they proved that while there exist tight $(1,0)$-stable and $(2,0)$-stable $n$-vertex graphs for every $n$, a tight $(k,0)$-stable graph has at most $k+6$ vertices for all $k\ge3$.

\begin{theorem}[\cite{DL24}]\label{thm:(k,0)-tight-vertices}
   For all $k \geq 3$, every tight $(k, 0)$-stable graph has at most $k+6$ vertices. 
\end{theorem}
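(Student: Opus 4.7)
The plan is to induct on $k$, reducing to the base case $k = 3$ by a clean vertex-deletion argument, and then handling $k = 3$ by a direct combinatorial analysis of the maximum independent set structure. The inductive step is essentially immediate; the base case carries the weight of the argument.

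\emph{Inductive step.} Assume the theorem holds for $k-1 \geq 3$ and let $G$ be tight $(k, 0)$-stable on $n$ vertices. First, $(k, 0)$-stability implies $(j, 0)$-stability for every $j \leq k$: any $T$ with $|T| = j$ extends to some $S \supseteq T$ with $|S| = k$, and then $\alpha(G-T) \geq \alpha(G-S) \geq \alpha(G)$. In particular, $\alpha(G - v) = \alpha(G)$ for each vertex $v$. For any $T' \subseteq V(G - v)$ with $|T'| = k - 1$, the identity $(G - v) - T' = G - (T' \cup \{v\})$ together with $(k, 0)$-stability yields $\alpha((G - v) - T') \geq \alpha(G) = \alpha(G - v)$, so $G - v$ is $(k-1, 0)$-stable. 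Since
\[
\alpha(G - v) \;=\; \alpha(G) \;=\; \floor*{\tfrac{n - k + 1}{2}} \;=\; \floor*{\tfrac{(n-1) - (k-1) + 1}{2}},
\]
$G - v$ is also tight. By the induction hypothesis, $n - 1 \leq (k-1) + 6$, so $n \leq k + 6$.

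\emph{Base case $k = 3$.} We must exclude tight $(3, 0)$-stable graphs with $n \geq 10$. Suppose such $G$ exists; then $\alpha := \alpha(G) = \lfloor (n - 2)/2 \rfloor \geq 4$. Fix a maximum independent set $I$ of size $\alpha$ and set $U := V(G) \setminus I$, with $|U| \in \{\alpha + 2, \alpha + 3\}$. Every $u \in U$ has a neighbor in $I$, else $I \cup \{u\}$ would be independent. For each triple $T \subseteq I$, $(3, 0)$-stability produces a maximum independent set $J_T$ disjoint from $T$ with $|J_T \cap U| \geq 3$; each vertex of $J_T \cap U$ has its $I$-neighborhood contained in $T \cup (I \setminus J_T)$. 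The $\binom{\alpha}{3}$ triples in $I$ thus generate a highly structured family of independent triples in $U$ with prescribed $I$-neighborhoods, and the aim is to combine these constraints with analogous ones from triples contained in $U$ and mixed triples to reach a contradiction.

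\emph{Main obstacle.} The serious obstacle is the base case. Since $|U|$ exceeds $|I|$ by only two or three vertices, the independent triples in $U$ indexed by different triples in $I$ must overlap heavily, yet each carries a very specific neighborhood pattern into $I$. The cleanest route I foresee is to partition $U$ according to $|N(u) \cap I|$ and then, via a pigeonhole on the many required independent triples in $U$, identify a triple $T^{\star} \subseteq V(G)$ that is hit by every maximum independent set of $G$, contradicting $(3, 0)$-stability. A useful preliminary subcase to dispatch first is $\alpha = 4$ (so $n \in \{10, 11\}$), which can be handled by explicit enumeration and then used to bootstrap larger $\alpha$ via a secondary induction on $\alpha$ inside the $k = 3$ argument.
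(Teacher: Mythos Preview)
The paper does not supply its own proof of this statement: Theorem~\ref{thm:(k,0)-tight-vertices} is quoted from Dong and Luo~\cite{DL24} and used as a black box throughout. So there is no in-paper argument to compare your attempt against.

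On its own merits, your reduction from $k$ to $k-1$ is correct and clean. Deleting any vertex $v$ from a tight $(k,0)$-stable graph $G$ yields a tight $(k-1,0)$-stable graph on one fewer vertex: $(k,0)$-stability implies $(1,0)$-stability, so $\alpha(G-v)=\alpha(G)$; $(k,0)$-stability of $G$ gives $(k-1,0)$-stability of $G-v$ via $(G-v)-T'=G-(T'\cup\{v\})$; and the tightness identity $\lfloor(n-k+1)/2\rfloor=\lfloor((n-1)-(k-1)+1)/2\rfloor$ is immediate. This legitimately reduces the whole theorem to the single assertion that no tight $(3,0)$-stable graph has ten or more vertices.

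The genuine gap is exactly where you place it. The base case $k=3$ carries all the content, and you have not proved it --- you describe a strategy (partition $U=V(G)\setminus I$ by $I$-degree, pigeonhole on the independent triples in $U$ forced by applying $(3,0)$-stability to triples in $I$, locate a triple $T^{\star}$ meeting every maximum independent set) without executing it, and your fallback of handling $\alpha=4$ by enumeration and then running a secondary induction on $\alpha$ is itself only a sketch with no mechanism supplied for the inductive step. As written, the proposal is a correct reduction followed by an unproved lemma. For orientation, the surrounding paper notes that the argument in~\cite{DL24} proceeds via $\alpha$-critical graphs; passing to an $\alpha$-critical spanning subgraph and invoking Lov\'asz-type bounds such as Theorems~\ref{thm:Lov-nomin} and~\ref{thm:Lov} on its components is likely a more tractable route to the $k=3$ case than the bare combinatorial attack you outline.
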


Observe that for every $k\geq 1$, a tight $(k,0)$-stable graph always exists, as demonstrated by the complete graphs  $K_{k+1}$ and  $K_{k+2}$. However, for $k\geq 3$, Dong and Luo \cite{DL24} asked whether there exists any other natural infinite family $G_k$ that is tight $(k,0)$-stable. In particular, they asked the following: does there exist a positive integer $k_0$ such that for all $k\geq k_0$, the only tight $(k,0)$-stable graphs are  $K_{k+1}$  (and  $K_{k+2}$)?
We first answer this  question in the positive. Let $R(s,t)$ denote the Ramsey number of $K_s$ and $K_t$, i.e., the minimum integer $N$ such that any graph on $n\geq N$ vertices contains either  an independent set   of $s$ vertices or a clique   of  $t$ vertices. Recall that $R(4,5)=25$. We  prove  Theorem~\ref{thm:tight (k,0)-unique} here as its proof is very short. Our proof is inspired by the argument of Dong and Wu~\cite{DW22} who proved that there is no tight $(3,0)$-stable graph on six vertices. 

\begin{theorem}\label{thm:tight (k,0)-unique}
For all $k\geq 24$, the only tight $(k,0)$-stable graphs are $K_{k+1}$ and $K_{k+2}$.
\end{theorem}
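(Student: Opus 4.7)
My plan is to combine Theorem~\ref{thm:(k,0)-tight-vertices} with a handful of small Ramsey numbers to rule out every vertex count except $n \in \{k+1, k+2\}$. Let $G$ be a tight $(k,0)$-stable graph on $n$ vertices with $k \geq 24$. Theorem~\ref{thm:(k,0)-tight-vertices} gives $n \leq k+6$, and the tightness equation $\alpha(G) = \lfloor (n-k+1)/2 \rfloor$ then yields $\alpha(G) \leq 3$. Moreover, $(k,0)$-stability combined with the trivial inequality $\alpha(G-S) \leq \alpha(G)$ forces $\alpha(G-S) = \alpha(G)$ for every $k$-subset $S \subseteq V(G)$.

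For $n \in \{k+1, k+2\}$, tightness gives $\alpha(G) = 1$; $(k,0)$-stability forces every $1$- or $2$-vertex induced subgraph of $G$ to have independence number $1$, so $G = K_n$, confirming the two claimed examples. For $n = k+3$, $\alpha(G) = 2$, and each $3$-vertex induced subgraph $G[V(G) \setminus S]$ has $\alpha \geq 2$, meaning $G$ is triangle-free; combined with $\alpha(G) \leq 2$ and $n \geq 27$, this contradicts $R(3,3) = 6$. For $n = k+4$, the same logic applied to $4$-vertex induced subgraphs shows $G$ is $K_4$-free, contradicting $R(4,3) = 9$ against $n \geq 28$.

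The main remaining cases are $n \in \{k+5, k+6\}$, both with $\alpha(G) = 3$. Here I would first argue that $G$ is $K_5$-free: if $G$ contained a $K_5$ on a vertex set $Y$, then taking $S = V(G) \setminus Y$ when $n = k+5$, or $S = V(G) \setminus (Y \cup \{v\})$ for some $v \notin Y$ when $n = k+6$, yields an induced subgraph on $|V(G) \setminus S| \in \{5,6\}$ vertices whose independence number is at most $2$, since any independent set meets the clique $Y$ in at most one vertex; this contradicts $\alpha(G-S) = 3$. Hence $G$ is $K_5$-free with $\alpha(G) \leq 3$, which together with $n \geq k+5 \geq 29 > R(5,4) = 25$ is a contradiction. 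The argument is essentially a case analysis with no significant obstacle; the threshold $k \geq 24$ is chosen precisely so that $n \geq R(5,4) = 25$ holds in every subcase.
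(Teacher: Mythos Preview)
Your proof is correct and follows essentially the same approach as the paper's: bound $n\le k+6$ via Theorem~\ref{thm:(k,0)-tight-vertices}, use $(k,0)$-stability to force $\alpha(G-S)=\alpha(G)$ on every $(n-k)$-vertex induced subgraph, and then invoke a Ramsey bound to produce a clique that violates this constraint. The only cosmetic difference is that the paper handles all cases $3\le c\le 6$ at once by observing that $\alpha(G)\le 3$ together with $n\ge k+1\ge 25=R(4,5)$ already forces a $K_5$, whereas you treat the cases $c=3,4$ separately with the smaller Ramsey numbers $R(3,3)$ and $R(3,4)$ before reaching the same $R(4,5)$ endgame for $c=5,6$.
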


\begin{proof}%[Proof of Theorem \ref{thm:tight (k,0)-unique}]
Let  $G$ be a tight $(k,0)$-stable graph on $n\ge k+1$ vertices, where  $k\geq  24$. Then 
\[\alpha(G) = \floor*{\frac{n-k+1}{2}}.\]
It suffices to show that $\alpha(G)=1$. Suppose $\alpha(G)\ge2$.
By Theorem \ref{thm:(k,0)-tight-vertices},   $n= k+c$   for some  positive  integer $c\leq 6$. 
Moreover, since $G$ is   $(k,0)$-stable, for every  induced subgraph  $T$ of $G$ with $|V(T)|=c$, 
\[ \alpha(T) = \alpha(G) = \floor*{\frac{k+c-k+1}{2}} = \floor*{\frac{c+1}{2}}.\]

 It follows that  $3\leq c\leq 6$ because $\alpha(G)\ge2$; moreover, $\alpha(G)=2$ when $3\le c\le4$ and $\alpha(G)=3$ when $5\le c\le6$. 
This, together with the fact that   $n \geq k+1\geq 25=R(4,5)$,  implies that   $\omega(G)\ge5$.    Let $K$ be a clique of $G$ with $|K|=5$ and $T$ be an induced subgraph of $G$ with $|V(T)|=c$ such that $|V(T)\cap K|$ is maximum. Then $\alpha(T)=1$ if  $c\leq 5$ and $\alpha(T)\le 2$ if $c=6$,    contrary to the fact that $\alpha(G)=\alpha(T)$.  This proves that for all $k\geq 24$, the only tight $(k,0)$-stable graphs are $K_{k+1}$ and $K_{k+2}$, as desired.
\end{proof}

In another direction, for every $\ell\geq 0$, Dong and Wu \cite{DW22} constructed a sequence of $n$-vertex $(\ell+3,\ell)$-stable graphs with independence number $n/2-O(\sqrt{n})$.  Alon~\cite{Alo21} extended the above result to a more general setting by showing that for every $k>\ell\geq 0$, there exists a sequence of $n$-vertex $(k,\ell)$-stable graphs with independence number $n/2-o(n)$. For $k=3$, Wu and Luo~\cite{DL24} asked whether there exists some constant $c>0$ such that there is a sequence of $(3,0)$-stable graphs $G$, with vertex number $n\to \infty$ and $\alpha(G)\geq |V(G)|/2-c$? Here we answer their question in the negative.

\begin{theorem}\label{thm:Question2-disprove}
    For every constant $q>0$ and $k\geq 3$, there exists some constant $C_{q,k}$ such that every $(k,0)$-stable graph $G$ with $\alpha(G)\geq |V(G)|/2-q$ has at most $C_{q,k}$ vertices.
\end{theorem}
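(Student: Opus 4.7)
The plan is to reduce Theorem~\ref{thm:Question2-disprove} to the general upper bound on $q'$-tight $(k,\ell)$-stable graphs announced in the abstract (and proved in the main body of the paper): namely, every $q'$-tight $(k,\ell)$-stable graph with $k \ge 3\ell+3$ has at most $k-3\ell-3+2^{3(\ell+2q'+4)^2}$ vertices. The reduction itself is routine bookkeeping; the real work lies in proving that general bound.

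The first step is to recast the density hypothesis ``$\alpha(G)\ge n/2-q$'' as a tightness condition. Let $G$ be a $(k,0)$-stable graph on $n$ vertices with $\alpha(G)\ge n/2-q$, and set
\[
q' \;:=\; \lfloor (n-k+1)/2 \rfloor - \alpha(G).
\]
By Theorem~\ref{thm:independence_num_Dong_Wu}, $q'$ is a nonnegative integer, so $G$ is $q'$-tight $(k,0)$-stable by definition. Using the trivial estimate $\lfloor (n-k+1)/2\rfloor \le (n-k+1)/2$ together with the hypothesis yields
\[
q' \;\le\; \lfloor (n-k+1)/2\rfloor - (n/2 - q) \;\le\; q - (k-1)/2,
\]
and in particular $q' \le q$ (indeed $q'\le q-1$ since $k\ge3$). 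Thus $q'$ is an integer bounded by a function of $q$ and $k$ alone.

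The second step is to invoke the general theorem with $\ell=0$. Since $k \ge 3 = 3\cdot 0+3$ and $G$ is $q'$-tight $(k,0)$-stable, we obtain
\[
n \;\le\; k - 3 + 2^{3(2q'+4)^2} \;\le\; k - 3 + 2^{3(2\lceil q\rceil + 4)^2} \;=:\; C_{q,k},
\]
which is the desired constant depending only on $q$ and $k$.

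In summary, Theorem~\ref{thm:Question2-disprove} is the $\ell=0$ specialization of the paper's main quantitative theorem, after translating Wu and Luo's formulation ``$\alpha(G)\ge n/2-q$'' into the ``$q'$-tight'' language. The only potential obstacle is verifying that the parameter shift $q\mapsto q'$ stays bounded in terms of $q$ and $k$, and this is handled by the elementary estimate above; the genuine technical work—which I would not attempt to redo here—is the proof of the general $q'$-tight bound elsewhere in the paper.
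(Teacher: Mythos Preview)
Your proposal is correct and matches the paper's approach: the paper likewise derives Theorem~\ref{thm:Question2-disprove} as an immediate corollary of the $q'$-tight bound, stating that it ``follows from Theorem~\ref{thm:q-tight(k,0)-stable} by setting $C_{q,k}=k-3+2^{3(2q+3)^2}$.'' The only cosmetic differences are that the paper invokes the slightly sharper $(k,0)$-specific bound of Theorem~\ref{thm:q-tight(k,0)-stable} rather than Theorem~\ref{thm:q-tight(k,l)} with $\ell=0$, and it does not spell out the routine inequality $q'\le q$ that you (correctly) verify.
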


We prove the following  stronger result. 
For integers $k>\ell\geq 0$ and $q\ge0$, we call a  graph $G$ \textit{$q$-tight} $(k,\ell)$-stable if $G$ is $(k,\ell)$-stable and  $\alpha(G) = \floor*{\frac{n-k+1}{2}}+\ell-q$. When $q=0$, a $q$-tight $(k,\ell)$-stable graph is exactly a tight $(k,\ell)$-stable graph. 

\begin{restatable}{theorem}{qtight}\label{thm:q-tight(k,l)}
  For nonnegative integers $k, \ell, q$ with $k\geq 3\ell+3 $, every $q$-tight $(k,\ell)$-stable graph has at most  $(k-3\ell-3)+2^{3(\ell+2q+4)^2}$ vertices.
\end{restatable}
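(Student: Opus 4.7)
The plan is to proceed by contradiction: assume $G$ is a $q$-tight $(k,\ell)$-stable graph with $c := n - k$ strictly larger than $2^{3(\ell+2q+4)^2} - 3\ell - 3$, and derive a contradiction. Put $\alpha := \alpha(G) = \lfloor (c+1)/2 \rfloor + \ell - q$. The starting observation is that $(k,\ell)$-stability together with $q$-tightness forces every induced subgraph $H = G - S$ of $G$ on exactly $c$ vertices (so $|S| = k$) to satisfy $\alpha(H) \in [\alpha - \ell, \alpha]$, a narrow window of $\ell + 1$ consecutive integers. This becomes the universal local constraint driving the argument, exactly as in the short proof of \cref{thm:tight (k,0)-unique}, but now with $\ell + 1$ possible values rather than a single one.

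First, I would derive numerical bounds on $\omega(G)$ and on the structure near a maximum clique. If $K$ is a clique of $G$ with $|K| = T \leq c$, and $H$ is any $c$-vertex induced subgraph containing $K$, then any independent set of $H$ meets $K$ in at most one vertex, giving $\alpha(H) \leq c - T + 1$. Combined with $\alpha(H) \geq \lfloor (c+1)/2 \rfloor - q$, this forces $T \leq \lceil c/2 \rceil + q + 1$. The case $T > c$ is ruled out directly: taking $H \subseteq K$ forces $\alpha(H) = 1$, impossible when $c \geq 2q + 3$. Symmetric reasoning in $\bar G$ gives $\alpha(G) \leq \lceil c/2 \rceil + \ell - q$, the $q$-tight value.

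The central step is a Ramsey-type extraction of a rigid local pattern. Fix a maximum independent set $I$ of $G$ and write $U := V(G) \setminus I$. I would define for each $v \in U$ a \emph{type} capturing how $v$ is distinguished by a small battery of independence/clique tests performed inside the window $[\alpha - \ell, \alpha]$. Using the numerical bounds from the first step, the type-space can be coarsened to have size at most $\ell + 2q + 4$. Colouring pairs of $U$-vertices by the joint type of their endpoints and applying a multi-colour Ramsey bound of the form $R_{r}(m) \leq 2^{O(r\,m)}$ with $r = O((\ell+2q+4))$ and $m = 3(\ell+2q+4)$, the hypothesis $n > 2^{3(\ell+2q+4)^2}$ produces a homogeneous set $W \subseteq U$ of size at least $3(\ell + 2q + 4)$, inside which every two vertices interact with $I$ and with one another in the same canonical way.

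Finally, the hypothesis $k \geq 3\ell + 3$ is used to construct a stability-violating $k$-subset $S \subseteq V(G)$. The homogeneous $W$ partitions naturally into three groups of $\ell + 1$ vertices each, and $S$ is assembled by combining these groups with suitably chosen vertices of $I$ and of a transversal of $I$-neighborhoods so that, on the one hand, $S$ destroys the $I$-side of every near-maximum independent set and, on the other hand, the residual vertices of $W$ in $G - S$ are forced by homogeneity to be pairwise ``too adjacent'' to substitute. One then verifies $\alpha(G - S) \leq \alpha - \ell - 1$, contradicting $(k,\ell)$-stability; hence $c$ cannot exceed the claimed bound.

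\textbf{Main obstacle.} The technical heart is the second step: choosing the type definition so that (a) the number of types is small enough that Ramsey returns the exponent $3(\ell + 2q + 4)^2$, and (b) a homogeneous $W$ encodes enough rigidity to drive the construction of $S$ in the third step. The hypothesis $k \geq 3\ell + 3$ is essential precisely because it supplies the three disjoint groups of $\ell + 1$ vertices from $W$ needed to simultaneously block every near-maximum independent set of $G - S$; without it, the construction cannot destroy more than $2\ell$ ``escape routes'' at once, and the stability inequality survives.
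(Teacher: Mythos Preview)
Your proposal is not a proof but a strategy sketch, and its central step is left undefined. You never say what the ``type'' of a vertex in $U$ is, nor how the type-space can be coarsened to size at most $\ell+2q+4$; you explicitly flag this as the ``technical heart'' and ``main obstacle'' without resolving it. Likewise, the construction of the violating set $S$ in the third step is entirely schematic: phrases like ``destroys the $I$-side of every near-maximum independent set'' and ``too adjacent to substitute'' carry no verifiable content. Without these two ingredients the argument does not go through. Even the quantitative bookkeeping is loose: a multicolour Ramsey bound $R_r(m)\le 2^{O(rm)}$ with unspecified implied constant cannot deliver the exact exponent $3(\ell+2q+4)^2$ in the statement.

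For comparison, the paper's argument is structurally very different and avoids Ramsey theory altogether. It inducts on $\ell$. The base case $\ell=0$ (\cref{thm:q-tight(k,0)-stable}) reduces to bounding $q$-tight $(3,0)$-stable graphs: pass to an $\alpha$-critical spanning subgraph, note its deficiency is at most $2q+3$ by \cref{obs2}, and invoke Lov\'asz's bound (\cref{thm:Lov}) that a connected $\alpha$-critical graph with minimum degree $\ge 3$ and deficiency $d$ has at most $2^{3d^2}$ vertices; this is exactly where the exponent $3(\cdot)^2$ comes from. For the inductive step, take a minimum $S$ with $\alpha(G-S)=\alpha(G)-1$. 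If $|S|\le k-3\ell$ then $G-S$ is $q$-tight $(k-|S|,\ell-1)$-stable and induction applies; if $|S|\ge k-3\ell+1$ then $G$ is $(k-3\ell,0)$-stable and one falls back to the $\ell=0$ case. The hypothesis $k\ge 3\ell+3$ is used only to ensure $k-3\ell\ge 3$, so that the $(k-3\ell,0)$-stable reduction is nontrivial --- not, as you suggest, to supply three disjoint blocks of $\ell+1$ vertices.
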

We obtain a slightly better upper bound  for  $q$-tight $(k,0)$-stable graphs.

\begin{theorem}\label{thm:q-tight(k,0)-stable}
  For integers $k\geq 3$ and $q\geq 0$,
every $q$-tight $(k,0)$-stable graph has at most $(k-3)+2^{3(2q+3)^2}$ vertices.
\end{theorem}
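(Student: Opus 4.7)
The plan is to first reduce to the base case $k=3$ and then analyze it. For the reduction, I would first observe that if $G$ is $(k,0)$-stable, then $\alpha(G-S)=\alpha(G)$ for every $S\subseteq V(G)$ with $|S|\le k$: extending $S$ to any superset $S'$ of size exactly $k$ gives $\alpha(G-S')=\alpha(G)$ by $(k,0)$-stability, and the sandwich $\alpha(G)\ge\alpha(G-S)\ge\alpha(G-S')=\alpha(G)$ follows from monotonicity. Now given a $q$-tight $(k,0)$-stable graph $G$ on $n$ vertices with $k\ge 3$, fix any $V'\subseteq V(G)$ with $|V'|=n-k+3$ and let $H:=G[V']$. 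Setting $S_0=V(G)\setminus V'$, we have $|S_0|=k-3\le k$, hence $\alpha(H)=\alpha(G-S_0)=\alpha(G)$. For any $S\subseteq V(H)$ with $|S|=3$, $H-S=G-(S_0\cup S)$ with $|S_0\cup S|=k$, so $\alpha(H-S)=\alpha(G)=\alpha(H)$; hence $H$ is $(3,0)$-stable. Since $\alpha(H)=\lfloor(n-k+1)/2\rfloor-q=\lfloor(|V(H)|-2)/2\rfloor-q$, $H$ is in fact $q$-tight $(3,0)$-stable on $n-k+3$ vertices. It therefore suffices to prove that every $q$-tight $(3,0)$-stable graph has at most $2^{3(2q+3)^2}$ vertices.

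For the base case, let $G$ be $q$-tight $(3,0)$-stable on $n$ vertices, so $\alpha:=\alpha(G)=\lfloor(n-2)/2\rfloor-q$ and every $(n-3)$-vertex induced subgraph $T$ of $G$ satisfies $\alpha(T)=\alpha$. The strategy is to combine structural restrictions forced by $(3,0)$-stability with a Ramsey-type bound. A first step is to control $\omega(G)$: if $K$ is a clique of size $s$ in $G$, then choosing an induced subgraph $T$ of size $n-3$ that contains $K$ forces any independent set in $T$ to use at most one vertex of $K$, so $\alpha(T)\le 1+\alpha(T-K)$; matching this with $\alpha(T)=\alpha$ and iterating with subcliques should force an upper bound on $s$ linear in $q$. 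More generally, one would identify small forbidden induced configurations on $O(q)$ vertices whose presence would compel some $(n-3)$-vertex induced subgraph to have independence number strictly less than $\alpha$. Combining these restrictions with a Ramsey-style estimate for graphs avoiding these configurations should then yield the bound $n\le 2^{3(2q+3)^2}$.

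The principal obstacle is extracting precisely the claimed exponential bound. Because $\alpha(G)$ grows linearly in $n$, a direct Ramsey estimate $n\le R(\omega+1,\alpha+1)$ is vacuous, so the $(3,0)$-stability must be leveraged in a more refined way. A natural framework is induction on $q$: from a $q$-tight $(3,0)$-stable graph on $n$ vertices, peel off a carefully chosen forbidden substructure on $O(q)$ vertices to produce an induced subgraph that is $(q-1)$-tight $(3,0)$-stable on $n-O(q)$ vertices, and iterate down to $q=0$, where \cref{thm:(k,0)-tight-vertices} provides the base case (at most $9$ vertices). The squared exponent $(2q+3)^2$ is consistent with performing this reduction over $\Theta(q)$ layers, each incurring a multiplicative cost of $2^{O(q)}$, for a total bound of $2^{O(q^2)}$. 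Carrying out the peeling step cleanly, and verifying that the resulting subgraph really is $(q-1)$-tight $(3,0)$-stable rather than losing the stability, is the main technical hurdle.
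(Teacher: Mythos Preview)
Your reduction from $k\ge3$ to $k=3$ is correct and is exactly what the paper does (stated in one line before Lemma~\ref{lem: q-tight (3,0)-stable}). The gap is in the base case. The paper does not use Ramsey arguments or forbidden configurations; instead it passes to an $\alpha$-critical spanning subgraph $G'$ with $\alpha(G')=\alpha(G)$. This $G'$ remains $q$-tight $(3,0)$-stable, so by Observation~\ref{obs0} it has $\delta(G')\ge3$, and by Observation~\ref{obs2} its deficiency is at most $2q+3$. When $G'$ is connected, Lov\'asz's Theorem~\ref{thm:Lov} then gives $n\le 2^{3(2q+3)^2}$ immediately. When $G'$ is disconnected, each component $C_i$ is $q_i$-tight $(3,0)$-stable with $q_i\le q-1$, and a short computation shows there are at most $q+1$ components; induction on $q$ (with Theorem~\ref{thm:(k,0)-tight-vertices} supplying the $q=0$ base) then sums the component bounds.

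Your peeling idea runs into precisely the obstacle you flagged: at $k=3$ there is no slack, so deleting any vertices can destroy $(3,0)$-stability. The paper avoids this entirely in the connected case by invoking the $\alpha$-critical/deficiency machinery rather than deleting vertices, and in the disconnected case the induction is over components, which inherit $(3,0)$-stability for free, not over peeled-off pieces of a single graph. Without the Lov\'asz bound on connected $\alpha$-critical graphs of bounded deficiency and minimum degree at least three, the Ramsey-style sketch you outline does not appear to reach the stated bound.
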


It is worth noting that Theorem \ref{thm:Question2-disprove} follows from Theorem \ref{thm:q-tight(k,0)-stable} by setting $C_{q,k}=k-3+2^{3(2q+3)^2}$.  Recall that $0$-tight $(k,\ell)$-stable graphs are tight $(k,\ell)$-stable graphs. Combining this with Theorem~\ref{thm:q-tight(k,l)} leads to Corollary~\ref{thm:(k,1)-stable-order}.

%When $q=0$,  Theorem~\ref{thm:q-tight(k,l)} implies that every tight $(k,\ell)$-stable graph with $k\ge 3\ell+3$ has at most $k-3\ell-3+2^{3(\ell+4)^2}$ vertices. In particular, when $q=0$ and $\ell=1$, we have the following corollary.

\begin{corollary}\label{thm:(k,1)-stable-order}
    For nonnegative integers $k, \ell$ with $k\geq 3\ell+3$, every  tight $(k,\ell)$-stable graph has at most  $(k-3\ell-3)+2^{3(\ell+4)^2}$ vertices. In particular, for all $k\geq 6$, every tight $(k,1)$-stable graph has at most $k-6+2^{75}$ vertices.
\end{corollary}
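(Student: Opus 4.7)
The plan is essentially a direct reduction to Theorem~\ref{thm:q-tight(k,l)}. Recall that by definition, a $0$-tight $(k,\ell)$-stable graph is exactly a tight $(k,\ell)$-stable graph, since the defining identity $\alpha(G) = \lfloor (n-k+1)/2 \rfloor + \ell - q$ becomes $\alpha(G) = \lfloor (n-k+1)/2 \rfloor + \ell$ when $q=0$. So the first part of the corollary is obtained immediately by invoking Theorem~\ref{thm:q-tight(k,l)} with $q=0$ and simplifying the exponent: the bound $(k-3\ell-3)+2^{3(\ell+2q+4)^2}$ collapses to $(k-3\ell-3)+2^{3(\ell+4)^2}$.

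For the second assertion, I would simply specialize to $\ell=1$. The hypothesis $k \geq 3\ell+3$ becomes $k \geq 6$, and substituting $\ell=1$ into $(k-3\ell-3)+2^{3(\ell+4)^2}$ yields $(k-6) + 2^{3\cdot 25} = (k-6) + 2^{75}$, as claimed.

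There is no real obstacle here: the corollary is a pure specialization of Theorem~\ref{thm:q-tight(k,l)}. The only step worth double-checking is the numerology, namely that $3(1+0+4)^2 = 75$ and that the hypothesis $k \ge 3\ell+3$ with $\ell=1$ indeed reduces to $k \ge 6$. All the actual work is carried out in the proof of Theorem~\ref{thm:q-tight(k,l)}.
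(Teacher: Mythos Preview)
Your proposal is correct and matches the paper's own approach exactly: the paper also obtains Corollary~\ref{thm:(k,1)-stable-order} by setting $q=0$ in Theorem~\ref{thm:q-tight(k,l)} (using that tight means $0$-tight) and then specializing to $\ell=1$. The numerology you checked is right, and there is nothing further to add.
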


One can now ask the same question: does there exist a positive integer $k_0$ such that for all $k\geq k_0$, the only tight $(k,1)$-stable graphs are $K_k$,  $K_{k+1}$ and  $K_{k+2}$? The answer is negative in this case, as such a graph may have independence number two. However, the number of vertices of such graphs is at most $k+2$.  The proof of Theorem~\ref{thm:tight (k,1)-unique} is similar  to the proof of Theorem~\ref{thm:tight (k,0)-unique}. We provide a   proof here for completeness. 

\begin{theorem}\label{thm:tight (k,1)-unique}
For all $k\geq R(2^{74}, 2^{75}-6)$, the only tight $(k,1)$-stable graphs are $K_k$, and  spanning subgraphs of $K_{k+1}$ or  $K_{k+2}$ with independence number two; in particular, for all $k\geq R(2^{74}, 2^{75}-6)$, every tight $(k,1)$-stable graph has at most $k+2$ vertices. 
\end{theorem}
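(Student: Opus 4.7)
The plan is to mirror the short proof of Theorem~\ref{thm:tight (k,0)-unique}, combining Corollary~\ref{thm:(k,1)-stable-order} with the Ramsey-type hypothesis on $k$ to force the vertex count of a tight $(k,1)$-stable graph $G$ to lie in $\{k, k+1, k+2\}$, and then handling these three cases directly. So let $G$ be a tight $(k,1)$-stable graph on $n$ vertices with $k \geq R(2^{74}, 2^{75}-6)$. By Corollary~\ref{thm:(k,1)-stable-order}, we may write $n = k + c$ with $0 \leq c \leq 2^{75} - 6$, and the tightness condition gives $\alpha(G) = \lfloor (c+1)/2 \rfloor + 1$.

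For the easy direction, I would check that each of the claimed graphs is genuinely a tight $(k,1)$-stable graph. When $c = 0$, $G = K_k$ has $\alpha(G) = 1$ and for $|S| = k$ we have $\alpha(G - S) = 0 = \alpha(G) - 1$. When $c \in \{1,2\}$ and $G$ is a spanning subgraph of $K_{k+c}$ with $\alpha(G) = 2$, for every $S$ with $|S| = k$ we have $|V(G) - S| \in \{1,2\}$ and hence $\alpha(G - S) \geq 1 = \alpha(G) - 1$. The tightness equation is immediate in all three cases.

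For the main direction, I would rule out $c \geq 3$. In this range $\alpha(G) - 1 = \lfloor (c+1)/2 \rfloor \geq 2$, so $(k,1)$-stability forces every $c$-vertex induced subgraph $T = G - S$ to satisfy $\alpha(T) \geq 2$; equivalently, $\omega(G) < c$, since otherwise selecting $T$ as any $c$-subset of a $c$-clique of $G$ would give $\alpha(T) = 1$. On the other hand, from $c \leq 2^{75} - 6$ one computes
\[\alpha(G) = \left\lfloor \frac{c+1}{2} \right\rfloor + 1 \leq 2^{74} - 2 < 2^{74},\]
while $|V(G)| = n \geq k \geq R(2^{74}, 2^{75} - 6)$. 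The definition of the Ramsey number then forces $G$ to contain a clique of size $2^{75} - 6 \geq c$, contradicting $\omega(G) < c$.

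Since the structure imitates that of Theorem~\ref{thm:tight (k,0)-unique} almost verbatim, I do not expect any serious obstacle. The two points that deserve a moment of care are the arithmetic bound $\alpha(G) \leq 2^{74} - 2$, which is what makes the Ramsey number $R(2^{74}, 2^{75}-6)$ in the hypothesis precisely strong enough to supply a clique of size $\geq c$, and the routine verification that the graphs listed in the easy direction really are $(k,1)$-stable in the sense of the definition.
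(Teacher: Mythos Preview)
Your proposal is correct and follows essentially the same argument as the paper: use Corollary~\ref{thm:(k,1)-stable-order} to bound $c\le 2^{75}-6$, bound $\alpha(G)<2^{74}$, invoke Ramsey to force a clique of size $2^{75}-6\ge c$, and derive a contradiction from a $c$-vertex induced subgraph sitting inside that clique. The only notable difference is that you explicitly verify the ``easy direction'' (that $K_k$ and the listed spanning subgraphs of $K_{k+1}$, $K_{k+2}$ really are tight $(k,1)$-stable), which the paper's proof omits; this makes your write-up slightly more complete.
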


\begin{proof} Let  $G$ be a tight $(k,1)$-stable graph on $n\ge k$ vertices, where  $k\geq  R(2^{74}, 2^{75}-6)$. Then 
\[\alpha(G) = \floor*{\frac{n-k+1}{2}}+1.\tag{a}\]
It suffices to show that $\alpha(G)\le2$. Suppose $\alpha(G)\ge3$.
By Corollary \ref{thm:(k,1)-stable-order},   $n= k+c$   for some  positive  integer $c\leq 2^{75}-6$.  It follows that 
\[\alpha(G) = \floor*{\frac{k+c-k+1}{2}}+1 = \floor*{\frac{c+1}{2}}+1\le  2^{74}-1.\]
Since $G$ is   $(k,1)$-stable, for every  induced subgraph  $T$ of $G$ with $|V(T)|=c$, 
 \[ \alpha(T) \ge  \alpha(G)-1 = \floor*{\frac{c+1}{2}}.\]
Note that    $n \geq k\geq R(2^{74}, 2^{75}-6)$ and $\alpha(G)\le 2^{74}-1$;   we see that   $\omega(G)\ge2^{75}-6$.    Let $K$ be a clique of $G$ with $|K|=2^{75}-6$ and $T$ be an induced subgraph of $G$ with $|V(T)|=c$ such that $|V(T)\cap K|$ is maximum. Then $\alpha(T)=1$,  contrary to the fact that $\alpha(T)\ge\alpha(G)-1\ge2$.  This proves that  $\alpha(G)\le2$,  and so $n\le k+2$ by $(a)$. It follows that $G$ is $K_k$ if $\alpha(G)=1$ and  $G$ is a spanning subgraph of $K_{k+1}$ or $K_{k+2}$ when  $\alpha(G)=2$. 
\end{proof}

\noindent{\bf Remark.} Following Theorem \ref{thm:tight (k,1)-unique}, one can show that for a fixed integer $\ell\geq 0$ and every $k\geq R(2^{3(\ell+4)^2-1}, 2^{3(\ell+4)^2}-3\ell-3)$, every tight $(k,\ell)$-stable graph has at most $k+2$ vertices.

It would be interesting to know whether  Theorem \ref{thm:q-tight(k,l)} holds when $k\le  3\ell+2$. For $2\ell+3\leq k\leq 3\ell+2$, we establish a similar result for $q$-tight $(k,\ell)$-stable graphs, albeit with a worse upper bound. However, the condition $k\geq 2\ell+3$ is best possible when $q=0$, as every odd cycle on at least $2\ell+3$ vertices is tight $(2\ell+2,\ell)$-stable (see Lemma \ref{lem:odd-cycle}).

\begin{restatable}{theorem}{qtightwide}\label{thm:2l+3_bound}
    For nonnegative integers $k, \ell, q$ with   $k\geq 2\ell+3 $, if $G$ is  a $q$-tight $(k,\ell)$-stable graph on $n$ vertices, then     
    $$n\leq (q+1)(\ell+1) (k+2q-2\ell-1)^2 2^{6(k-2\ell+2q)^2}.$$
\end{restatable}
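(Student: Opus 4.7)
The plan is to adapt the inductive approach underlying Theorem~\ref{thm:q-tight(k,l)} to the wider range $k\geq 2\ell+3$, at the cost of the larger bound claimed here. Setting $g(k,\ell,q):=(q+1)(\ell+1)(k+2q-2\ell-1)^2 \cdot 2^{6(k-2\ell+2q)^2}$, I would prove $n\leq g(k,\ell,q)$ by a two-layer induction: an outer induction reducing to the slab $k=2\ell+3$, and an inner induction on $\ell$ inside the slab whose base case $\ell=0$ is Theorem~\ref{thm:q-tight(k,0)-stable}.

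For the outer layer, I would first observe that any $(k,\ell)$-stable $G$ with $k>\ell$ must contain an edge: otherwise $V(G)$ is independent and deleting any $k$ of its vertices drops $\alpha$ by $k>\ell$, violating stability. Consequently some $v\in V(G)$ satisfies $\alpha(G-v)=\alpha(G)$, and a routine check shows $G-v$ is $q$-tight $(k-1,\ell)$-stable. Since the exponent in $g$ jumps by at least $6\cdot 5=30$ whenever $k$ increases by $1$ (because $(k-2\ell+2q)^2-(k-1-2\ell+2q)^2\geq 5$ in our regime), we have $g(k,\ell,q)\geq g(k-1,\ell,q)+1$, so iterating this deletion while $k-1\geq 2\ell+3$ reduces the problem to the slab $k=2\ell+3$.

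Inside the slab, with $\ell\geq 1$, I would search for two vertices $u,v\in V(G)$ with $\alpha(G-\{u,v\})=\alpha(G)-1$. A direct verification shows $G-\{u,v\}$ is then $q$-tight $(k-2,\ell-1)=(2(\ell-1)+3,\ell-1)$-stable, preserving the Ramsey parameter $m:=k-2\ell+2q$. A short computation gives $g(k-2,\ell-1,q)=\tfrac{\ell}{\ell+1}\,g(k,\ell,q)$, so applying the inductive hypothesis to $G-\{u,v\}$ yields $n\leq 2+\tfrac{\ell}{\ell+1}\,g(k,\ell,q)\leq g(k,\ell,q)$. The inner base $\ell=0$ reduces to $(k,\ell)=(3,0)$, where Theorem~\ref{thm:q-tight(k,0)-stable} delivers $n\leq (k-3)+2^{3(2q+3)^2}$, comfortably within $g(k,0,q)$.

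The main obstacle is establishing existence of such a pair $(u,v)$ in the slab. If some vertex $w$ lies in every maximum independent set then $\alpha(G-w)=\alpha(G)-1$ immediately, and a companion $u$ with $\alpha(G-w-u)=\alpha(G-w)$ is produced by applying the outer-layer edge argument to $G-w$. The delicate scenario is when no single vertex lies in every maximum independent set --- as happens, for example, for $K_7$ minus a $3$-matching, which is a tight $(5,1)$-stable graph in which no pair of vertices yields the desired $\alpha$-drop. In this degenerate case I would argue that the structural restrictions forced by $(k,\ell)$-stability, combined with Ramsey's theorem applied to $G$ to extract either a large clique (allowing an alternative $r$-vertex reduction with $r\geq 2s$ that still preserves $m$) or a direct contradiction, already constrain $n$ to the claimed order. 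The exponent $2^{6(k-2\ell+2q)^2}$ in $g(k,\ell,q)$ is sized precisely to absorb this Ramsey cost, and I expect this degenerate case to be the core technical difficulty beyond what appears in the proof of Theorem~\ref{thm:q-tight(k,l)}.
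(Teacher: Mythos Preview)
Your outer layer is fine, and the arithmetic $g(k-2,\ell-1,q)=\tfrac{\ell}{\ell+1}g(k,\ell,q)$ is correct. The gap is the inner layer. You correctly identify that a pair $u,v$ with $\alpha(G-\{u,v\})=\alpha(G)-1$ need not exist, but your proposed patch does not close this. If the minimum size $s$ of a set whose deletion drops $\alpha$ by one satisfies $s\geq 3$, the most you can extract is that $G$ is $(s-1,0)$-stable; but in the slab $k=2\ell+3$ with $\ell\geq 1$ this only gives $(2,0)$-stability when $s=3$, and the paper explicitly notes that tight $(2,0)$-stable graphs exist on arbitrarily many vertices. Your Ramsey suggestion does not help either: a $q$-tight $(k,\ell)$-stable graph has $\alpha(G)=\lfloor(n-k+1)/2\rfloor+\ell-q\sim n/2$, so Ramsey yields no useful clique, and in any case a large clique would not by itself bound $n$. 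In short, the degenerate case you flag is not a corner case but the heart of the problem, and your sketch offers no mechanism to handle it.

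The paper's proof is entirely different and does not induct on $\ell$ at all. It passes to an $\alpha$-critical spanning subgraph $G'$ with $\alpha(G')=\alpha(G)$ and invokes Lov\'asz's structure theory: a connected $\alpha$-critical graph of deficiency $d$ arises from a graph on at most $2^{3d^2}$ vertices of minimum degree $\geq 3$ by repeated vertex splits, whence (using Hajnal's bound $\Delta\leq d+1$) it has at most $(d-1)2^{3d^2}$ vertices of degree $\geq 3$. The $(k,\ell)$-stability hypothesis is then used only once, to cap the number of degree-two vertices: a degree-two path of length $2\ell+3$ would let one delete its $2\ell+3\leq k$ vertices and then reinsert $\ell+1$ of them into a maximum independent set of the rest, contradicting stability. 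This is exactly where the exponent $2^{6(k-2\ell+2q)^2}$ comes from (via $d\leq k-2\ell+2q$ and a $\binom{\cdot}{2}$), not from any Ramsey bound. Finally a short calculation bounds the number of components of $G'$ by $q+1$, giving the leading factor. None of this machinery is visible in your proposal.
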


%For $q=0$, the condition $k\geq 2\ell+3$ is best possible when establishing upper bounds on the order of tight $(k,\ell)$-stable graphs, as every odd cycle on at least $2\ell+3$ vertices is tight $(2\ell+2,\ell)$-stable (see Proposition \ref{lem:odd-cycle}).

The paper is organized as follows. We prove Theorem \ref{thm:q-tight(k,l)} and Theorems~\ref{thm:q-tight(k,0)-stable} in Section~\ref{sec:main2}, and  Theorem \ref{thm:2l+3_bound} in Section~\ref{sec:2l+3}. We end the paper with some concluding remarks.

\section{Preliminaries}
Following Wu and Luo~\cite{DL24}, our proofs  also utilize properties of $\alpha$-critical graphs, where 
a graph $G$ is  \textit{$\alpha$-critical} if $\alpha(G-e)>\alpha(G)$ for every edge $e\in E(G)$. We define  $d(G):= |V(G)|-2\alpha(G)$ to be  the \textit{deficiency} of $G$.  Theorem~\ref{thm:Lov-nomin} and Theorem~\ref{thm:Lov} are results of Lov\'asz~\cite{Lov78, Lov19} to bound the number of vertices of degree at least three in connected,  $\alpha$-critical graphs.

\begin{theorem}[\cite{Lov78}]\label{thm:Lov-nomin}
Let $G$ be a connected,  $\alpha$-critical graph with   deficiency  $d \geq 0$. Then $G$ has at most $2^{2^{d+2}}$ vertices of degree at least three.
 \end{theorem}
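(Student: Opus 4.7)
\medskip
\noindent\textbf{Proof plan.} My plan is to proceed by induction on the deficiency $d$, using structural results on $\alpha$-critical graphs together with a pigeonhole/Ramsey argument that accounts for the doubly exponential form of the bound. For the base cases $d=0$ and $d=1$, the only connected $\alpha$-critical graphs are $K_2$ and odd cycles, respectively, neither of which has any vertex of degree at least three, so the bound $2^{2^{d+2}}$ holds trivially.

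For the inductive step with $d\ge 2$, I would rely on two classical facts about connected $\alpha$-critical graphs: the Erd\H{o}s--Gallai bound $\Delta(G)\le d+1$, which forces every vertex to have degree in $\{2,3,\dots,d+1\}$; and the criticality property that for every edge $uv\in E(G)$, every maximum independent set of $G-uv$ contains both $u$ and $v$. Using these, I would assign to each vertex $v$ of degree at least three a ``local signature'' recording $\deg(v)$ together with a bounded amount of combinatorial data describing how $N(v)$ interacts with the maximum independent sets of $G-v$ (equivalently, with the minimum vertex covers, via the Gallai identity $\alpha(G)+\tau(G)=|V(G)|$ for $\alpha$-critical graphs). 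Since $|N(v)|\le d+1$, the total number of possible signatures is at most roughly $2^{d+2}$.

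The main step is then a pigeonhole argument: if $G$ has more than $2^{2^{d+2}}$ vertices of degree at least three, then some signature class must contain more than $2^{d+2}$ such vertices, and within this class one can find two vertices $u,v$ whose local structures are compatible enough that contracting or identifying them yields a new connected $\alpha$-critical graph $G'$ with strictly smaller deficiency. Applying the inductive hypothesis to $G'$ then produces a contradiction, once one carries out the bookkeeping that translates the bound on $G'$ back to one on $G$.

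The principal obstacle will be defining the local signature carefully enough that two same-signature vertices are genuinely interchangeable and that identification preserves $\alpha$-criticality, while at the same time keeping the alphabet pinned down to exactly $2^{d+2}$ entries so the pigeonhole delivers the tight doubly exponential constant rather than a larger tower. Lov\'asz's original proof in \cite{Lov78} handles this through a delicate analysis of \emph{critical covers} and the matroidal structure of maximum independent sets of $\alpha$-critical graphs, and reproducing the exact constant in the exponent is the technically hardest aspect of the argument.
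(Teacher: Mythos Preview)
The paper does not prove this theorem; it is quoted from Lov\'asz~\cite{Lov78} as background and is never actually used in any argument (only Theorem~\ref{thm:Lov} is used). The closest thing the paper does prove is Lemma~\ref{lem:num_of_degree_3_vertex}, which for $d\ge 2$ gives the sharper bound $(d-1)\cdot 2^{3d^2}$ on the number of vertices of degree at least three, and that proof proceeds by an entirely different route from yours: it invokes Theorem~\ref{thm:Lov} (Lov\'asz's bound $|V(H)|\le 2^{3d^2}$ for connected $\alpha$-critical $H$ with $\delta(H)\ge 3$), the split operation (Observation~\ref{obs:split}), and Hajnal's bound $\Delta\le d+1$ (Theorem~\ref{thm:Hajnal-max-degree}), observing that every connected $\alpha$-critical graph of deficiency $d$ arises from such an $H$ by iterated splitting and that splitting a vertex $v$ can create at most $d_H(v)-2\le d-1$ new vertices of degree $\ge 3$ in total.

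As for your proposal on its own terms, the central step has a genuine gap. You claim that two vertices with the same ``local signature'' can be contracted or identified to yield a connected $\alpha$-critical graph of strictly smaller deficiency, but you never define the signature, and in general identifying two vertices of an $\alpha$-critical graph need not preserve $\alpha$-criticality, nor need it decrease the deficiency. The pigeonhole arithmetic is also off: more than $2^{2^{d+2}}$ vertices spread over at most $2^{d+2}$ signature classes yields a class of size more than $2^{2^{d+2}-(d+2)}$, not ``more than $2^{d+2}$'', and you give no mechanism turning a large class into the desired reduction. (Minor point: the bound $\Delta(G)\le d+1$ is Hajnal's theorem, not Erd\H{o}s--Gallai.) What you have is a plausible-sounding outline of the \emph{shape} of a doubly exponential bound, but the key reduction is asserted rather than constructed; Lov\'asz's actual argument in~\cite{Lov78} goes through a finite-basis scheme that is not captured by your sketch, and the present paper bypasses it altogether via Theorem~\ref{thm:Lov} and splits.
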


\begin{theorem}[\cite{Lov19}]\label{thm:Lov}
Let $G$ be a connected,  $\alpha$-critical graph with minimum degree at least $3$ and deficiency {$d \geq 2$}. Then $G$ has at most $2^{3 d^2}$ vertices.
\end{theorem}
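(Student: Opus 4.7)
The target is Lovász's sharp bound that a connected $\alpha$-critical graph $G$ with $\delta(G)\ge 3$ and deficiency $d\ge 2$ has at most $2^{3d^2}$ vertices. This refines Theorem~\ref{thm:Lov-nomin}, which bounds only the vertices of degree at least three by $2^{2^{d+2}}$; under $\delta(G)\ge 3$ that weaker bound already applies to every vertex, so the task is to turn a double exponential into a single exponential in $d^2$.

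My plan centers on the family $\mathcal{I}(G)$ of maximum independent sets of $G$, which is extremely rich in an $\alpha$-critical graph. The driving local fact is that for every edge $uv$ there is an independent set $J\supseteq\{u,v\}$ of size $\alpha(G)+1$ in $G-uv$, whence $J\setminus\{u\}$ and $J\setminus\{v\}$ are maximum independent sets of $G$ differing only by swapping $u$ with $v$; in particular every vertex lies in some member of $\mathcal{I}(G)$. Assign each vertex $v$ its \emph{signature} $\sigma(v)\in\{0,1\}^{\mathcal{I}(G)}$ recording the members of $\mathcal{I}(G)$ containing it. The strategy is a two-stage bound. \emph{Stage A:} bound the effective dimension of the signature space by $3d^2$, using the identity $|V(G)|=2\alpha(G)+d$ together with a Gallai--Edmonds-style decomposition, where the deficiency $d$ controls the number of independent "swap coordinates" parameterizing $\mathcal{I}(G)$ and each coordinate carries at most $O(d)$ values. \emph{Stage B:} show the map $v\mapsto\sigma(v)$ is injective whenever $\delta(G)\ge 3$. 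If $u\ne v$ satisfied $\sigma(u)=\sigma(v)$ then $u\not\sim v$ (otherwise no maximum independent set could contain both, while some must contain $u$), and applying the edge-swap pair at each edge incident to $u$ to produce maximum independent sets that contain $v$ but avoid each neighbor of $u$ would force $u$ to have degree at most two, contradicting the hypothesis. Combining the two stages yields $|V(G)|\le 2^{|\mathcal{I}(G)|}\le 2^{3d^2}$.

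The main obstacle is Stage~A: reducing the combinatorial complexity of $\mathcal{I}(G)$ from the double-exponential count underlying Theorem~\ref{thm:Lov-nomin} to a quadratic-in-$d$ effective dimension. The naive induction that gives $2^{2^{d+2}}$ loses an exponential factor at every reduction step; the refinement demands an ear-decomposition adapted to $\alpha$-critical graphs in which each ear, being forced to have minimum degree at least three in its interior, cannot contain long subdivisions or iterated pendant configurations, so that each ear contributes linearly to $d$ while contributing only a polynomial factor to the count of signature coordinates. Making this bookkeeping precise --- and identifying which "atomic" $\alpha$-critical pieces survive under the minimum-degree hypothesis --- is the delicate heart of the argument and is precisely where $\delta(G)\ge 3$ is used in a substantive, rather than merely cosmetic, way.
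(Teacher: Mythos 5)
First, a point of comparison: the paper does not prove this statement at all. Theorem~\ref{thm:Lov} is quoted as a known theorem of Lov\'asz from \cite{Lov19} and is used as a black box throughout, so there is no in-paper argument to measure your proposal against --- only the citation.

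As written, your proposal is a programme rather than a proof, and the missing piece is exactly where the content of the theorem lies. Stage~A asserts that the signature map $v\mapsto\sigma(v)$ takes at most $2^{3d^2}$ values, but nothing in the write-up establishes this: the ``Gallai--Edmonds-style decomposition'', the ``independent swap coordinates'' each carrying ``at most $O(d)$ values'', and the ``ear-decomposition adapted to $\alpha$-critical graphs'' are named but never defined, and you yourself flag the bookkeeping as the ``delicate heart'' that remains to be done. Moreover, the concluding chain $|V(G)|\le 2^{|\mathcal{I}(G)|}\le 2^{3d^2}$ would require $|\mathcal{I}(G)|\le 3d^2$, i.e., that a connected $\alpha$-critical graph with minimum degree three has only quadratically many maximum independent sets in terms of its deficiency; that is a different and unsupported claim, and it does not match what Stage~A actually promises (a bound on the number of distinct signatures, not on $|\mathcal{I}(G)|$). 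Stage~B is also incomplete: from $\sigma(u)=\sigma(v)$ you correctly deduce $u\not\sim v$, but the step ``applying the edge-swap pair at each edge incident to $u$ \dots would force $u$ to have degree at most two'' is a gesture, not an argument --- you would need to exhibit, for all but at most two neighbours $w$ of $u$, a maximum independent set separating $u$ from $v$, and no mechanism for producing such sets is given. In short, no part of the quantitative bound $2^{3d^2}$ is actually derived; if you need this statement, cite Lov\'asz as the paper does, or supply the decomposition argument of Stage~A in full detail.
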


We next make some observations on the deficiency of  $(k,\ell)$-stable graphs and $q$-tight $(k,\ell)$-stable graphs. Observation~\ref{obs0} is due to Dong and Luo~\cite[Theorem 1.3]{DL24}.

\begin{observation}[\cite{DL24}]\label{obs0}
Every $(3,0)$-stable graph has minimum degree at least three.
\end{observation}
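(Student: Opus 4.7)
The plan is a short contradiction. Suppose $G$ is a $(3,0)$-stable graph and some $v\in V(G)$ has $d_G(v)\le 2$; the goal is to construct a $3$-subset $S\subseteq V(G)$ whose removal strictly decreases the independence number, contradicting stability. The key idea is to absorb $v$ together with all of its (at most two) neighbors into $S$: once $N(v)$ is removed, $v$ becomes an ``isolated'' vertex relative to $V(G)\setminus S$, so any independent set of $G-S$ extends to a strictly larger independent set of $G$ by adding $v$.

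Concretely, I would first verify that $|V(G)|\ge 4$ so that a $3$-set $S\supseteq\{v\}\cup N(v)$ exists. This follows from \cref{thm:independence_num_Dong_Wu}, which gives $\alpha(G)\le \lfloor (n-2)/2\rfloor$ for any $(3,0)$-stable graph on $n$ vertices and thus rules out $n\le 3$ (any nonempty graph has $\alpha\ge 1$). Next I would choose $S\subseteq V(G)$ with $|S|=3$, $v\in S$, and $N(v)\subseteq S$, padding with arbitrary vertices of $V(G)\setminus N[v]$ if $|N[v]|<3$. Letting $I$ be a maximum independent set of $G-S$, the inclusion $N(v)\subseteq S$ forces $N(v)\cap I=\emptyset$, so $I\cup\{v\}$ is independent in $G$ and
\[\alpha(G)\ge |I|+1=\alpha(G-S)+1,\]
contradicting the $(3,0)$-stability requirement $\alpha(G-S)\ge\alpha(G)$.

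Because the whole argument is a single vertex-extension step, there is no substantive obstacle; the only delicate point is that the removal set $S$ must have size exactly $3$ while still containing $\{v\}\cup N(v)$, which is why one first rules out $|V(G)|\le 3$ via \cref{thm:independence_num_Dong_Wu}.
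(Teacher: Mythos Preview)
The paper does not give its own proof of this observation; it is quoted from Dong and Luo~\cite{DL24} (their Theorem~1.3) and stated without argument. Your proposed proof is correct and is exactly the natural one: if $d_G(v)\le 2$, take any $S$ of size $3$ containing $N[v]$, and extend a maximum independent set of $G-S$ by $v$ to contradict $(3,0)$-stability. The preliminary check that $|V(G)|\ge 4$ via \cref{thm:independence_num_Dong_Wu} is a clean way to guarantee such an $S$ exists; in fact $|V(G)|\ge 3$ already suffices for the padding step, but your stronger bound does no harm.
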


\begin{observation}\label{obs1}
Every $(k, \ell)$-stable  graph with $k\ge 2\ell+3$ has deficiency at least two. In particular, every $(3,0)$-stable graph has deficiency at least two.   
\end{observation}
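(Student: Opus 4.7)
The plan is to derive the deficiency bound directly from Theorem~\ref{thm:independence_num_Dong_Wu}. Let $G$ be a $(k,\ell)$-stable graph on $n$ vertices with $k \geq 2\ell+3$. By Theorem~\ref{thm:independence_num_Dong_Wu},
\[
\alpha(G) \leq \floor*{\frac{n-k+1}{2}} + \ell \leq \frac{n-k+1}{2} + \ell,
\]
so $2\alpha(G) \leq n - k + 1 + 2\ell$. Therefore
\[
d(G) = n - 2\alpha(G) \geq k - 1 - 2\ell \geq (2\ell+3) - 1 - 2\ell = 2,
\]
which is the desired conclusion. The ``in particular'' statement follows by taking $k=3$ and $\ell=0$, since these values satisfy $k = 2\ell+3$.

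There is no real obstacle here; the observation is essentially a one-line reformulation of Theorem~\ref{thm:independence_num_Dong_Wu} in terms of the deficiency parameter $d(G) = |V(G)| - 2\alpha(G)$. The only point worth checking in the writeup is that the inequality $\lfloor (n-k+1)/2 \rfloor \leq (n-k+1)/2$ always holds (with equality iff $n-k$ is odd), which is immediate and causes no loss. No case analysis, no appeal to $\alpha$-critical machinery, and no use of Theorems~\ref{thm:Lov-nomin} or \ref{thm:Lov} is needed for this particular observation.
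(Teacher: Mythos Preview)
Your proof is correct and matches the paper's own argument essentially line for line: both apply Theorem~\ref{thm:independence_num_Dong_Wu}, drop the floor, and rearrange to get $d(G)\ge k-2\ell-1\ge 2$.
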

\begin{proof} Let $G$ be a $(k, \ell)$-stable  graph on $n$ vertices with $k\ge 2\ell+3$. By Theorem \ref{thm:independence_num_Dong_Wu}, 
\[\alpha(G)\le \floor*{\frac{n-k+1}{2}}+\ell \le \frac{n-k+1}{2}+\ell,\]
    and so $ d(G)= n-2\alpha(G)\ge k-2\ell-1\ge  (2\ell+3)-2\ell-1 = 2$, as desired.    
\end{proof}

 \begin{observation}\label{obs2}
For all integers $k, \ell, q$ with $k>\ell$ and $q\ge0$, every $q$-tight $(k, \ell)$-stable  graph $G$  satisfies  $k-2\ell+2q -1\le d(G)\le k-2\ell+2q$.   
\end{observation}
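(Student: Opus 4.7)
The plan is to derive both bounds by a direct computation from the definitions, with the only nuance being a parity case split on $n-k$. Write $n = |V(G)|$. Since $G$ is $q$-tight $(k,\ell)$-stable, the definition gives
$$\alpha(G) = \floor*{\frac{n-k+1}{2}} + \ell - q,$$
so by definition of deficiency,
$$d(G) = n - 2\alpha(G) = n - 2\floor*{\frac{n-k+1}{2}} - 2\ell + 2q.$$
Hence it suffices to show that $n - 2\lfloor (n-k+1)/2 \rfloor \in \{k-1, k\}$.

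I would then split into two cases according to the parity of $n-k$. If $n-k$ is odd, then $n-k+1$ is even and $\lfloor(n-k+1)/2\rfloor = (n-k+1)/2$, giving $n - 2\lfloor(n-k+1)/2\rfloor = k-1$, so $d(G) = k-2\ell+2q-1$. If $n-k$ is even, then $\lfloor(n-k+1)/2\rfloor = (n-k)/2$, giving $n - 2\lfloor(n-k+1)/2\rfloor = k$, so $d(G) = k-2\ell+2q$. In either case $d(G) \in \{k-2\ell+2q-1,\, k-2\ell+2q\}$, which is precisely the claimed two-sided bound.

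There is no real obstacle here: the statement is essentially a restatement of the tightness hypothesis once one observes the floor function introduces exactly a $\pm 1$ parity wobble. No structural graph theory is needed beyond unpacking the definitions of $d(G)$ and of $q$-tightness.
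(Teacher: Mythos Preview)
Your proof is correct and takes essentially the same approach as the paper: both unpack the definition of $q$-tightness and compute $d(G)=n-2\alpha(G)$ directly. The only cosmetic difference is that you split on the parity of $n-k$ to pin down the exact value of $n-2\lfloor(n-k+1)/2\rfloor$, whereas the paper simply sandwiches $\lfloor(n-k+1)/2\rfloor$ between $(n-k)/2$ and $(n-k+1)/2$ to obtain the two-sided bound.
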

\begin{proof} Let $G$ be a $q$-tight $(k, \ell)$-stable  graph on $n$ vertices. Then  
\[ \frac{n-k}{2}+\ell-q\le \alpha(G)= \floor*{\frac{n-k+1}{2}}+\ell-q\le \frac{n-k+1}{2}+\ell-q.\]
   It follows that  $ k-2\ell+2q-1\le d(G)= n-2\alpha(G)\le k-2\ell+2q$, as desired.    
\end{proof}

\section{Proofs of Theorem~\ref{thm:q-tight(k,l)} and  Theorem~\ref{thm:q-tight(k,0)-stable}}\label{sec:main2}

We begin this section with a lemma on the number of vertices of $q$-tight $(3,0)$-stable graphs. Note that for all $k\ge3$,  the graph obtained from a  $q$-tight $(k,0)$-stable graph   by deleting any $k-3$ vertices is  $q$-tight $(3,0)$-stable. Combining this with Lemma~\ref{lem: q-tight (3,0)-stable} leads to Theorem~\ref{thm:q-tight(k,0)-stable}. 
%In this section we first prove Theorem~\ref{thm:q-tight(k,0)-stable}. To prove Theorem~\ref{thm:q-tight(k,0)-stable}, we first bound the number of vertices of $q$-tight $(3,0)$-stable graphs. Observe that  Then Theorem~\ref{thm:q-tight(k,0)-stable} follows from Lemma \ref{lem: q-tight (3,0)-stable}.
 \begin{lemma} \label{lem: q-tight (3,0)-stable}
 For all $q\ge 0$, every $q$-tight $(3,0)$-stable graph has at most $2^{3(3+2q)^2}$ vertices. 
\end{lemma}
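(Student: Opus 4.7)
The plan is to pass from $G$ to an $\alpha$-critical spanning subgraph and then apply Theorem~\ref{thm:Lov} component by component. Let $G$ be a $q$-tight $(3,0)$-stable graph on $n$ vertices, so Observation~\ref{obs2} yields $d(G)\le 3+2q$. I would take $H$ to be a spanning subgraph of $G$ with the fewest edges such that $\alpha(H)=\alpha(G)$; by minimality $H$ is $\alpha$-critical, and clearly $d(H)=d(G)\le 3+2q$.

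The first key step is to argue that $H$ inherits $(3,0)$-stability from $G$, and hence that $\delta(H)\ge 3$. Indeed, for any $S\subseteq V(H)$ with $|S|=3$, $H-S$ is a spanning subgraph of $G-S$, so $\alpha(H-S)\ge \alpha(G-S)\ge\alpha(G)=\alpha(H)$, while the reverse inequality is trivial since every independent set of $H-S$ is an independent set of $H$. Observation~\ref{obs0} then gives $\delta(H)\ge 3$.

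Next I would decompose $H$ into its connected components $C_1,\ldots,C_m$. Each $C_i$ is connected, $\alpha$-critical, and satisfies $\delta(C_i)\ge 3$. The classical structure theorem says that the only connected $\alpha$-critical graphs of deficiency $0$ or $1$ are $K_2$ and the odd cycles, both of which have minimum degree at most $2$; hence $d(C_i)\ge 2$ for every $i$, and Theorem~\ref{thm:Lov} applies to give $|V(C_i)|\le 2^{3\,d(C_i)^2}$.

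To conclude, set $D=\sum_{i=1}^m d(C_i)=d(H)\le 3+2q$. I would bound
\[
n=\sum_{i=1}^m |V(C_i)|\le \sum_{i=1}^m 2^{3\,d(C_i)^2}\le 2^{3D^2}\le 2^{3(3+2q)^2},
\]
where the middle inequality follows by a short induction from the pointwise estimate $2^{3a^2}+2^{3b^2}\le 2^{3(a+b)^2}$ for integers $a,b\ge 2$ (since $2^{3(a+b)^2}/2^{3a^2}=2^{6ab+3b^2}\ge 2$). The main obstacle is precisely that $H$ can fail to be connected (for instance $K_4\cup K_4$ is a $1$-tight $(3,0)$-stable $\alpha$-critical graph), so Theorem~\ref{thm:Lov} cannot be invoked once on all of $H$; instead its per-component estimates must be patched together via this superadditivity, and one must separately rule out deficiency $0$ or $1$ components through the $\delta\ge 3$ condition.
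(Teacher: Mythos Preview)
Your argument is correct, and it is actually a bit slicker than the paper's. The paper proves the lemma by induction on $q$: the base case $q=0$ is imported from Dong--Luo's Theorem~\ref{thm:(k,0)-tight-vertices}, and for $q\ge 1$ the paper passes to an $\alpha$-critical spanning subgraph $G'$, applies Theorem~\ref{thm:Lov} directly if $G'$ is connected, and otherwise shows that each component $C_i$ is $q_i$-tight $(3,0)$-stable with $\sum q_i\le q+1-t$ (so $q_i\le q-1$), then invokes the induction hypothesis on each $C_i$ and sums. You bypass the induction entirely: after establishing $\delta(H)\ge 3$ via Observation~\ref{obs0}, you rule out deficiency-$0$ and deficiency-$1$ components using the Erd\H{o}s--Gallai/odd-cycle classification (also recorded in the paper, just before Theorem~\ref{thm:Hajnal-max-degree}), apply Theorem~\ref{thm:Lov} to every component, and glue the bounds with the superadditivity $2^{3a^2}+2^{3b^2}\le 2^{3(a+b)^2}$ for $a,b\ge 2$ together with $\sum_i d(C_i)=d(H)\le 3+2q$. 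The payoff of your route is that it does not need Theorem~\ref{thm:(k,0)-tight-vertices} as external input and avoids tracking the per-component tightness parameters $q_i$; the paper's route, in turn, makes the $q_i$-bookkeeping explicit, which is the template it reuses in the proof of Theorem~\ref{thm:q-tight(k,l)}. (Incidentally, you could also get $d(C_i)\ge 2$ without the classification: each $C_i$ is itself $(3,0)$-stable, so Observation~\ref{obs1} applies.)
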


\begin{proof}
We proceed the proof by  induction on $q$. By Theorem \ref{thm:(k,0)-tight-vertices}, every tight $(3,0)$-stable graph has at most $9<2 ^{27}$ vertices, and so the statement holds when 
 $q=0$. We may   assume that $q\ge 1$, and the statement holds for 
 any $q'$-tight $(3,0)$-stable graph, where $0\le q'<q$.
Let $G$ be a $q$-tight $(3,0)$-stable graph on $n$ vertices. Then 
\[ \alpha(G)=\floor*{\frac{n-3+1}{2}}-q=\floor*{\frac{n-2}{2}}-q.\]
Let $G'$ be an $\alpha$-critical spanning subgraph of $G$ such that $\alpha(G')=\alpha(G)$. Note that $G'$ is also $q$-tight $(3,0)$-stable. By Observation~\ref{obs2}, $ 2q +2\le d(G')\le  2q+3$. 
Suppose $G'$ is connected.   By Theorem \ref{thm:Lov}, $G'$ has at most $2^{3 (3+2q)^2}$ vertices.
 We may  assume that  $G'$ is disconnected. Let $C_1, C_2, \ldots, C_t$ be the components of $G'$, where $t\ge 2$, and let $n_i:=|V(C_i)|$ for each $i\in [t]$. Observe that $C_i$ is $(3,0)$-stable. Then $C_i$ is $q_i$-tight $(3,0)$-stable for some integer $q_i\ge 0$. This gives 

\[\frac{n-3}{2}-q \le \floor*{\frac{n-2}{2}}-q =\alpha(G)=\alpha(G')=\sum_{i=1}^t \alpha(C_i)=\sum_{i=1}^t \left(\floor*{\frac{n_i-3+1}{2}}-q_i\right)\le \frac{n}{2}-t - \sum_{i=1}^t q_i.\]
It follows that  $\sum_{i=1}^t q_i \le q + 3/2 -t$, and so  $\sum_{i=1}^t q_i \le q + 1 -t$ as $q_i, q, t$ are all integers. Hence $t\le q + 1-\sum_{i=1}^t q_i \le q+1$,  $\sum_{i=1}^t q_i \le q+1-2=q-1$ as $t\ge 2$, and  so $0\le q_i \le q-1$ for each $i\in [t]$.  We may assume that $G'$ has exactly $t_0$ components that are tight $(3,0)$-stable. Then $0\le t_0\le t$. By the induction hypothesis, 
$n_i\le 2 ^{3(3+2q_i)^2}$ for every $i\in [t]$, moreover  $n_i\le 9$  if $q_i=0$.   This implies that 
\begin{align*}
n=\sum_{i=1}^t n_i &=\sum_{i\in[t]: q_i=0} n_i +\sum_{i\in [t]: q_i\ge 1} n_i  
\\& \le \sum_{i\in[t]: q_i=0} 9 +\sum_{i\in [t]: q_i\ge 1} 2 ^{3(3+2q_i)^2} \\
&=9t_0+\sum_{i\in [t]: q_i\ge 1} 2 ^{3(3+2q_i)^2}.
\end{align*}
Note that the function $f(x)=2^{3(3+2x)^2}$ is increasing and satisfies that $f(x)+f(y)\le f(x+y)$ for $x,y\ge 1$. Hence
\[\sum_{i\in [t]: q_i\ge 1} 2 ^{3(3+2q_i)^2}= \sum_{i\in [t]: q_i\ge 1} f(q_i) \le f(\sum_{i\in [t]: q_i\ge 1} q_i)\le f(\sum_{i=1}^t q_i) \le f(q+1-t).
\]
This gives \[n \le 9t_0+\sum_{i\in [t]: q_i\ge 1} 2 ^{3(3+2q_i)^2} \le 9t+f(q+1-t)=9t+2^{3(3+2q+2-2t)^2} \le 2^{3(3+2q)^2},\]
 because  $2\le t \le q+1$ and the function $9t+2^{3(3+2q+2-2t)^2}$ is decreasing in $t$ over the interval  $ [2,q+1]$. Therefore, every $q$-tight $(3,0)$-stable graph has at most $2^{3(3+2q)^2}$ vertices.
\end{proof}
 We are now ready to prove Theorem~\ref{thm:q-tight(k,l)}, which we restate it here for convenience.\qtight*
\begin{proof}
We apply induction on $\ell$.  By Theorem \ref{thm:q-tight(k,0)-stable}, the statement holds for $\ell=0$.   We may assume that $\ell\ge 1$ and  the statement holds for  every  $q$-tight $(k',\ell')$-stable graph with $k'\ge 3\ell'+3$, where $0\le \ell'<\ell$.
Let  $G$ be a $q$-tight $(k, \ell)$-stable graph on $n$ vertices with $k\ge 3\ell+3$. Then 
\[\alpha(G) =\floor*{\frac{n-k+1}{2}}+\ell-q.\] 
We choose $S\subseteq V(G)$ such that $\alpha(G-S)=\alpha(G)-1$ and $|S|$ is minimum. Let $s:=|S|$.  Observe that $s\ge 1$; by the choice of $S$,   $G$ is $(s-1, 0)$-stable  when $s\ge 2$. Suppose $s\ge k-3\ell+1 $. 
%Since $k-3 \ell\ge 3\ell+3-3\ell= 3$, $s\ge k -3\ell+1 \ge 4$ and 
Then $G$ is $(k-3\ell, 0)$-stable because $k\ge 3\ell+ 3$ and $G$ is $(k, \ell)$-stable. Let 
\[p:=\floor*{\frac{n-(k-3\ell)+1}{2}}-\alpha(G)=\floor*{\frac{n-(k-3\ell)+1}{2}}-\floor*{\frac{n-k+1}{2}}-\ell+q.\]
Then $p\le(\ell+1)/2+q$ and  $G$ is $p$-tight $(k-3\ell, 0)$-stable. By Theorem~\ref{thm:q-tight(k,0)-stable},   
\[n\le (k-3\ell-3)+2^{3(3+2p)^2}\le k-3\ell-3+ 2^{3(3+\ell+1+2q)^2}= k-3\ell-3 +2^{3(4+\ell+2q)^2},\] 
as desired. We may assume that  $s\le k-3\ell$. Then $k-s\ge 3\ell =3(\ell-1)+3>\ell-1$. Note that $G-S$ has $n-s$ vertices and $\alpha(G-S)=\alpha(G)-1$. Since $G$ is $(k,\ell)$-stable, we see that $G-S$ is $(k-s, \ell-1)$-stable. Observe that 
\[\alpha(G-S)=\alpha(G)-1=\floor*{\frac{n-k+1}{2}}+\ell-q-1=\floor*{\frac{n-s-(k-s)+1}{2}}+(\ell-1)-q,\]
and it follows that $G-S$ is $q$-tight $(k-s,\ell-1)$-stable. %First we consider the case $\ell=1$. Then $G-S$ is $q$-tight $(k-s,0)$-stable, where $k-s\ge 3$, and it follows from Corollary \ref{cor: q-tight (k,0)-stable} that $G-S$ has at most $(k-s-3)+2^{3(3+2q)^2}$ vertices. Thus $G$ has at most $s+(k-s-3)+2^{3(3+2q)^2}=k-3+2^{3(3+2q)^2}\le k-6 +2^{3(4+1+2q)^2}$ vertices. Now we consider the case $\ell \ge 2$. 
Recall that $k-s \ge 3(\ell-1)+3$.  By the induction hypothesis,  $G-S$ has at most $(k-s-3(\ell-1)-3)+2^{3(4+(\ell-1)+2q)^2}$ vertices. Therefore,  
 \[n=|S|+|V(G-S)|\le s+(k-s-3\ell)+2^{3(3+\ell+2q)^2}\le k-3\ell-3 + 2^{3(4+\ell+2q)^2}.\] 
 
This completes the proof of Theorem~\ref{thm:q-tight(k,l)}.
\end{proof}

\section{Proof of  Theorem~\ref{thm:2l+3_bound}}\label{sec:2l+3}

As mentioned earlier, it would be interesting to know whether  Theorem \ref{thm:q-tight(k,l)} holds when $k\le  3\ell+2$.  We begin this section with a lemma which demonstrates that $k$ needs to be larger than $2\ell+2$.
%With a weaker bound $C_{q,k,\ell}$ (see Theorem~\ref{thm:2l+3_bound}), we are able to show that for all $k\geq 2\ell+3\geq 3$ and $q\geq 0$, every $q$-tight $(k,\ell)$-stable graph has at most  $C_{q,k,\ell}$   vertices. Observe that when $q=0$, $k\ge 2\ell+3$ is best possible as every odd cycle on at least $2\ell+3$ vertices is tight $(2\ell+2, \ell)$-stable.
\begin{lemma}\label{lem:odd-cycle}
 For all $\ell\ge 0$, every odd cycle on at least $2\ell+3$ vertices is tight $(2\ell+2, \ell)$-stable.   
\end{lemma}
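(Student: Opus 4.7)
The plan is as follows. Let $n \ge 2\ell+3$ be odd, let $C_n$ denote the cycle on $n$ vertices, and recall that $\alpha(C_n) = (n-1)/2$.

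First I would verify the tightness equality: since $n$ is odd, $n - 2\ell - 1$ is even, so
\[
\left\lfloor \frac{n - (2\ell+2) + 1}{2} \right\rfloor + \ell \;=\; \frac{n - 2\ell - 1}{2} + \ell \;=\; \frac{n-1}{2} \;=\; \alpha(C_n),
\]
which matches the definition of tight. Hence it remains to check that $C_n$ is $(2\ell+2,\ell)$-stable, i.e., that for every $S\subseteq V(C_n)$ with $|S|=2\ell+2$,
\[
\alpha(C_n - S) \;\ge\; \alpha(C_n) - \ell \;=\; \frac{n-1}{2} - \ell.
\]

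The key step is a parity argument on the components of $C_n - S$. Deleting $S$ from the cycle leaves a disjoint union of paths $P_{n_1},\ldots, P_{n_m}$ (one for each maximal arc of $V(C_n)\setminus S$), with each $n_i\ge 1$ and $\sum_{i=1}^{m} n_i = n - 2\ell - 2$. Since $\alpha(P_{n_i}) = \lceil n_i/2 \rceil$, we have
\[
\alpha(C_n - S) \;=\; \sum_{i=1}^{m}\left\lceil \frac{n_i}{2}\right\rceil \;=\; \frac{n - 2\ell - 2}{2} + \frac{o}{2},
\]
where $o$ is the number of indices $i$ with $n_i$ odd. Now $n - 2\ell - 2$ is odd (as $n$ is odd), so the sum $\sum n_i$ is odd, which forces $o \ge 1$. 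Substituting $o\ge 1$ yields
\[
\alpha(C_n - S) \;\ge\; \frac{n-2\ell-2}{2} + \frac{1}{2} \;=\; \frac{n-1}{2} - \ell,
\]
which is exactly the bound required.

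There is no real obstacle here; the proof is essentially a parity bookkeeping exercise. The only subtle point to flag is that each $n_i \ge 1$ (so that the $m$ path components are genuine and we are summing $m$ ceilings of positive integers), which follows because each $n_i$ corresponds to a maximal run of unremoved vertices between two consecutive maximal runs of removed vertices. With that in hand, the display above closes the argument.
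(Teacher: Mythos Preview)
Your proof is correct and follows essentially the same route as the paper: decompose $C_n - S$ into paths, sum the path independence numbers, and use parity to recover the missing half. The only cosmetic difference is that the paper bounds $\lceil n_i/2\rceil \ge n_i/2$ and then rounds up at the end (since $\alpha(C-S)$, $\alpha(C)$, $\ell$ are integers), whereas you track the number $o$ of odd-length components explicitly and use $o\ge 1$; these are two phrasings of the same parity step.
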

\begin{proof}
Let $C$ be an odd cycle on $n\ge 2\ell+3$ vertices.   Let $S\subseteq V(C)$ such that $|S|=2\ell+2$. Then $C-S$ is a disjoint union of paths. Denote $C-S$ by $\cup_{i=1}^t P_i$, where $P_i$ is a path on $n_i$ vertices in $C$ for each $i\in [t]$.   Since $\alpha(P_i)=\ceil*{n_i/2}\ge n_i/2$, we see that 
\[\alpha(C-S)=\sum_{i=1}^t\alpha(P_i)\ge \sum_{i=1}^t \frac{n_i}{2}=\frac{n-|S|}{2}=\frac{n-1}{2}-\ell-\frac{1}{2}=\alpha(C)-\ell-\frac{1}{2}.\]
This gives $\alpha(C-S)\ge \alpha(C)-\ell$ as $\alpha(C-S), \alpha(C), \ell$ are all integers. Hence, $C$ is $(2\ell+2, \ell)$-stable. Note that 
\[\alpha(C)=\frac{n-1}{2}=\floor*{\frac{n-(2\ell+2)+1}{2}}+\ell.\] Thus, $C$ is tight $(2\ell+2, \ell)$-stable, as desired.  
\end{proof}

Before we prove Theorem \ref{thm:2l+3_bound}, we  need to introduce more properties of    $\alpha$-critical graphs.  Erd\H{o}s and Gallai \cite{Erdos-Gallai1961} proved  that the deficiency of an $\alpha$-critical graph is always non-negative, and the only connected, $\alpha$-critical graph with deficiency $0$ is $K_2$. Theorem \ref{thm:Hajnal-max-degree} is a result of Hajnal~\cite{Hajnal1965} from 1965 on the maximum degree of an $\alpha$-critical graph. 

\begin{theorem}[\cite{Hajnal1965}]\label{thm:Hajnal-max-degree}
    Let $G$ be an $\alpha$-critical graph with deficiency $d\geq 0$. Then $\Delta(G)\leq d+1$.
\end{theorem}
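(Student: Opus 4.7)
The plan is to fix a vertex $v$ of maximum degree $\Delta$ in $G$, with neighbors $N(v)=\{u_1,\dots,u_\Delta\}$, and to show that $W:=V(G)\setminus(\{v\}\cup N(v))$ contains at least $2(\alpha(G)-1)$ vertices. Writing $|V(G)|=1+\Delta+|W|$, this is equivalent to $\Delta\le|V(G)|-2\alpha(G)+1=d+1$. I implicitly assume $G$ has no isolated vertices, the standard hypothesis for this theorem.

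As a preliminary, I would first establish that every vertex of an $\alpha$-critical graph without isolated vertices lies in some maximum independent set: given any vertex $v$ and any incident edge $vu$, $\alpha$-criticality yields an $(\alpha(G)+1)$-independent set of $G-vu$ which must contain both $v$ and $u$ (otherwise it would already be independent in $G$, contradicting $\alpha(G)$), and deleting $u$ produces a max ind set through $v$. Fix such a max ind set $I\ni v$; then $N(v)\cap I=\emptyset$, so $I\setminus\{v\}\subseteq W$ supplies the first batch of $\alpha(G)-1$ vertices of $W$. For each $i\in[\Delta]$, I then apply $\alpha$-criticality at the edge $vu_i$ to get an independent set $S_i$ of size $\alpha(G)+1$ in $G-vu_i$ with $\{v,u_i\}\subseteq S_i$. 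Since $v\in S_i$ and $vu_j$ ($j\ne i$) remains an edge of $G-vu_i$, no other $u_j$ lies in $S_i$, so $T_i:=S_i\setminus\{v,u_i\}\subseteq W$ is an independent set of size $\alpha(G)-1$, and $T_i\cup\{v\}$ and $T_i\cup\{u_i\}$ are both maximum independent sets of $G$.

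The remaining step is to show $|W\setminus I|\ge\alpha(G)-1$. I would argue by contradiction, supposing $|W\setminus I|\le\alpha(G)-2$. Then each $T_i$ (of size $\alpha(G)-1>|W\setminus I|$) must meet $I\setminus\{v\}$, and since $|T_i|=|I\setminus\{v\}|$, either $T_i=I\setminus\{v\}$ or $T_i$ has a vertex in $W\setminus I$. In the extremal case $T_i=I\setminus\{v\}$ for every $i$, each $u_i$ is non-adjacent to all of $I\setminus\{v\}$, which forces $N(v)$ to induce a clique (otherwise $(I\setminus\{v\})\cup\{u_i,u_j\}$ would be an $(\alpha(G)+1)$-independent set for a non-adjacent pair $u_i,u_j$). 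Applying $\alpha$-criticality to an edge $u_iu_j$ inside this $N(v)$-clique then produces another $(\alpha(G)-1)$-independent subset $R'\subseteq W\setminus(N(u_i)\cup N(u_j))$; a switching argument across the edges of the clique shows $R'$ cannot always coincide with $I\setminus\{v\}$, yielding the extra vertex in $W\setminus I$ needed for the contradiction.

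The main obstacle is precisely this final counting step. The naive estimate $\sum_i|T_i|=\Delta(\alpha(G)-1)$ alone only yields $|W|\ge\Delta$, far short of the target $|W|\ge 2(\alpha(G)-1)$. Extracting $\alpha(G)-1$ \emph{distinct} representatives of $W\setminus I$ from the $T_i$'s and their analogues (the auxiliary critical sets $R'$ coming from edges inside $N(v)$) requires a delicate switching/exchange procedure — the technical heart of Hajnal's original paper — rather than a simple pigeonhole argument, and packaging this carefully is where the real work of the proof lies.
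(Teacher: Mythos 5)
The paper does not actually prove this statement---it is quoted from Hajnal's 1965 paper and used as a black box---so there is no internal proof to compare against; I can only judge your proposal on its own terms, and it has a genuine gap at precisely the point you flag. Your reduction is correct: with $W=V(G)\setminus(\{v\}\cup N_G(v))$ one has $|V(G)|=1+d_G(v)+|W|$, so $d_G(v)\le d+1$ is equivalent to $|W|\ge 2(\alpha(G)-1)$; and since $|W\cap I|=\alpha(G)-1$ for any maximum independent set $I\ni v$, this is in turn equivalent to $|W\setminus I|\ge\alpha(G)-1$. The preliminary facts (every vertex lies in a maximum independent set, each $T_i=S_i\setminus\{v,u_i\}$ is an independent $(\alpha(G)-1)$-subset of $W$ with $T_i\cup\{v\}$ and $T_i\cup\{u_i\}$ maximum independent) are all correct. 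But the claim $|W\setminus I|\ge\alpha(G)-1$ is then an exact restatement of the theorem, not a stepping stone, and the argument offered for it cannot close.

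The structural problem is this: under the contradiction hypothesis $|W\setminus I|\le\alpha(G)-2$, your dichotomy (``$T_i=I\setminus\{v\}$ or $T_i$ meets $W\setminus I$'') and the subsequent clique/switching discussion are engineered to produce a \emph{single} vertex of $W\setminus I$. That contradicts nothing unless $\alpha(G)\le 2$; for $\alpha(G)\ge 3$ the hypothesis $|W\setminus I|\le\alpha(G)-2$ is perfectly compatible with $W\setminus I$ being nonempty. Even if the ``switching argument across the edges of the clique'' were made fully precise, the proof as structured would only deliver $|W|\ge\alpha(G)$, i.e.\ $d_G(v)\le |V(G)|-\alpha(G)-1$, which for $\alpha(G)\ge 2$ is strictly weaker than the target $d+1=|V(G)|-2\alpha(G)+1$. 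What is required---and what you explicitly defer as ``the technical heart of Hajnal's original paper''---is a mechanism that extracts $\alpha(G)-1$ \emph{distinct} vertices of $W\setminus I$, for instance an exchange argument along the components of the subgraph induced on $I\,\triangle\,S_i$, or an induction over the neighbors $u_1,\dots,u_{d_G(v)}$ that certifies one genuinely new vertex of $W$ at each step beyond the first $\alpha(G)-1$. As written, the proposal is a correct reformulation of the theorem together with an honest admission that its core is missing, so it does not constitute a proof.
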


 Theorem \ref{thm:Hajnal-max-degree} implies that the only connected, $\alpha$-critical graphs with deficiency $1$ are the odd cycles. The structures of connected, $\alpha$-critical graphs for deficiency $2$ and $3$ are classified by Andr\'asfai~\cite{And67} and Sur\'anyi~\cite{Sur73},  respectively.  We next introduce an operation on $\alpha$-critical graphs due to Lov\'asz~\cite{Lov78, Lov19}.

A \textit{split} operation on a vertex $v$ in a graph $G$ with $d_G(v)\ge2$ is to first  replace $v$ with two new non-adjacent vertices $v'$ and $v''$ such that $N_G(v')\cap N_G(v'')=\emptyset$, $\min\{|N_G(v')|, |N_G(v'')|\}\geq 1$ and $N_G(v')\cup N_G(v'') = N_G(v)$, and  then connect both $v'$ and $v''$ to a new vertex $w$. Observe that given an edge $uv$ (where $d_G(v)\geq 2$), splitting the vertex $v$ into $v'$, $v''$ such that $N_G(v') = \{u\}$ is equivalent to subdividing the edge $uv$ by two vertices (which are $v'$ and the new vertex $w$).  Lov\'asz~\cite{Lov78, Lov19} made the following two observations.

\begin{observation}[\cite{Lov78, Lov19}]\label{obs:split} For any graph $G$, 
    \begin{enumerate}[(i)]
        \item if $G'$ is obtained from $G$ by splitting a vertex $v\in V(G)$, then $G'$ is $\alpha$-critical if and only if $G$ is, and $d(G)= d(G')$. 
        \item if $G\ne K_3$ is an $\alpha$-critical graph and $u\in V(G)$ is a vertex of degree two with neighbors $v, w$. Then  $vw\notin E(G)$,  and  $u$ is the only common neighbor of $v$ and $w$.
    \end{enumerate}
\end{observation}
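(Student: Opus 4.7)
For part (i), my plan is to first establish $\alpha(G')=\alpha(G)+1$, from which the deficiency identity $d(G')=d(G)$ is immediate since $|V(G')|=|V(G)|+2$. The lower bound $\alpha(G')\ge\alpha(G)+1$ will follow by lifting any maximum independent set $I$ of $G$: take $I\cup\{w\}$ if $v\notin I$, or $(I\setminus\{v\})\cup\{v',v''\}$ if $v\in I$, where the latter is independent in $G'$ because the hypothesis $N_G(v')\cup N_G(v'')=N_G(v)$ forces $I\cap(N_G(v')\cup N_G(v''))=\emptyset$. The upper bound will come from case analysis on a maximum independent set $I'$ of $G'$ according to whether $w,v',v''\in I'$: if $w\in I'$ then $v',v''\notin I'$ and $I'\setminus\{w\}$ is independent in $G$; if $w\notin I'$ and $\{v',v''\}\subseteq I'$ then $(I'\setminus\{v',v''\})\cup\{v\}$ works; if $w\notin I'$ and exactly one of $v',v''$ lies in $I'$, swap it for $w$ to reduce to the first subcase; and the remaining subcase contradicts maximality of $I'$.

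For the $\alpha$-criticality equivalence in (i), I will classify the edges of $G'$ into (a) old edges disjoint from $\{v',v'',w\}$; (b) split edges $v'x$ with $x\in N_G(v')$ and symmetrically $v''y$ with $y\in N_G(v'')$; and (c) the new edges $v'w$ and $v''w$. Types (a) and (b) will transfer directly between $G$ and $G'$ via the lift/project correspondence of the previous paragraph applied to the witness $\alpha$-critical independent sets. The main obstacle is type (c): to verify $\alpha(G'-v'w)>\alpha(G')$ when $G$ is $\alpha$-critical, I will pick $b\in N_G(v'')$ (nonempty by the split conditions) and use $\alpha$-criticality of $G$ at the edge $vb$ to obtain $J$ independent in $G-vb$ with $v,b\in J$ and $|J|=\alpha(G)+1$. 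The key point is that $J\cap N_G(v)=\{b\}$ combined with $N_G(v')\cap N_G(v'')=\emptyset$ forces $b\notin N_G(v')$, so $(J\setminus\{v\})\cup\{v',w\}$ is independent in $G'-v'w$ and has size $\alpha(G)+2=\alpha(G')+1$. The reverse direction of the equivalence is symmetric, using $\alpha$-criticality of $G'$ to build witnesses for $G$.

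For part (ii), I will argue by contradiction, assuming $vw\in E(G)$ for claim (a) or that some $u'\neq u$ is a common neighbor of $v,w$ for claim (b). Applying $\alpha$-criticality of $G$ at $uv$ and $uw$ produces maximum independent sets $I_1\ni u,v$ in $G-uv$ and $I_2\ni u,w$ in $G-uw$, each of size $\alpha(G)+1$, where the degree-$2$ structure at $u$ forces $w\notin I_1$ and $v\notin I_2$. Under hypothesis (a), a third witness from criticality at $vw$, combined with the existence of a vertex outside $\{u,v,w\}$ (guaranteed by $G\ne K_3$ together with connectedness which one may assume by restricting to the component of $u$), should enable an exchange argument that produces an independent set of $G$ of size exceeding $\alpha(G)$. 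Under hypothesis (b), the analogous exchange is driven by the extra common neighbor $u'$, which plays the role of a parallel degree-$2$ vertex at which the same argument applies. The hard part will be making the exchange step rigorous: extracting, from the exclusion relations imposed by $I_1,I_2$ and the third witness, a single independent set in $G$ that is strictly larger than $\alpha(G)$, thereby yielding the desired contradiction.
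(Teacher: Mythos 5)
The paper does not prove this observation; it is quoted directly from Lov\'asz \cite{Lov78, Lov19}, so there is no in-paper argument to compare against and your attempt has to stand on its own. Part (i) of your proposal is essentially sound: the lift/project correspondence correctly gives $\alpha(G')=\alpha(G)+1$ (hence $d(G')=d(G)$), and your treatment of the new edges $v'w$, $v''w$ via a neighbor $b\in N_G(v'')$ is exactly the right trick. The one place you wave your hands is the reverse implication of the criticality equivalence (``symmetric''): it is not literally symmetric, because splitting is not an involution. For an edge $vx$ of $G$ with $x\in N_G(v')$ you would take a maximum independent set $J'$ of $G'-v'x$ containing $v'$ and $x$ and project it to $(J'\setminus\{v',v''\})\cup\{v\}$; this only works if $v''\in J'$, and you need the extra (easy but non-automatic) observation that if $v''\notin J'$ then $(J'\setminus\{v'\})\cup\{w\}$ is independent in $G'$ itself and has size $\alpha(G')+1$, a contradiction. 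That step should be written out.

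Part (ii), by your own admission, is a plan rather than a proof, and the plan as stated does not quite close. The third witness you propose for the case $vw\in E(G)$ --- criticality at the edge $vw$ itself --- gives a set $I_3$ containing $v,w$ and excluding $u$, from which no contradiction follows directly: $(I_3\setminus\{v,w\})\cup\{u\}$ only recovers size $\alpha(G)$. The exchange that actually works takes the witness at an edge from $v$ to a vertex \emph{outside} the triangle: since $G$ is connected and $G\neq K_3$, some $z\notin\{u,w\}$ is adjacent to (say) $v$; a maximum independent set $I$ of $G-vz$ containing $v$ and $z$ excludes both $u$ and $w$ (each adjacent to $v$ by an undeleted edge), so $(I\setminus\{v\})\cup\{u\}$ is independent in $G$ of size $\alpha(G)+1$, the desired contradiction. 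The same template proves the second claim with $z$ replaced by the putative second common neighbor $u'$: a witness for the edge $u'v$ contains $u'$ and $v$, excludes $u$ (adjacent to $v$) and $w$ (adjacent to $u'$), and deleting $v$ while inserting $u$ again overshoots $\alpha(G)$. Finally, note that the statement really needs $G$ connected (or ``no component equal to $K_3$''): $K_3\cup K_2$ is $\alpha$-critical, is not $K_3$, and violates (ii), so your remark that one may ``restrict to the component of $u$'' does not rescue the literal statement --- although connectedness does hold wherever the paper applies the observation.
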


We also need the following observation on $\alpha$-critical graphs, which we prove it here for completeness.
\begin{observation}\label{obs:critical_graph_min_degree}
   Let $G$ be a connected $\alpha$-critical graph with $|V(G)|\geq 3$. Then $\delta(G)\geq 2$. 
\end{observation}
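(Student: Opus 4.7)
The plan is to proceed by contradiction, using $\alpha$-criticality on an edge not incident to the low-degree vertex. Since $G$ is connected with $|V(G)|\ge 3$, no vertex can be isolated, so it suffices to rule out the existence of a vertex of degree exactly one. Suppose for contradiction that some $v\in V(G)$ satisfies $d_G(v)=1$, and let $u$ be its unique neighbor. Because $G$ is connected and has at least three vertices, $u$ must have some other neighbor $w\ne v$.

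The key idea will be that $v$ and $u$ behave \emph{interchangeably} in independent sets of $G$, which will clash with $\alpha$-criticality of the edge $uw$. Applying $\alpha$-criticality to that edge gives $\alpha(G-uw)=\alpha(G)+1$, since deleting a single edge can raise the independence number by at most one and criticality forces a strict increase. I then take a maximum independent set $I$ of $G-uw$; it must contain both $u$ and $w$, for otherwise $I$ would already be independent in $G$ and too large. Since the edge $uv$ is still present in $G-uw$ and $u\in I$, the vertex $v$ cannot lie in $I$.

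Finally, I will form $I':=(I\setminus\{u\})\cup\{v\}$ and verify that it is independent in $G$: the only neighbor of $v$ in $G$ is $u$, which has been removed; for the remaining pairs, independence is inherited from $I$ in $G-uw$ because the only edge of $G$ absent in $G-uw$ is $uw$, and $u\notin I'$. Hence $|I'|=|I|=\alpha(G)+1$, contradicting the definition of $\alpha(G)$. I do not foresee any serious obstacle, as the whole argument is a short swap in the same spirit as Observation~\ref{obs:split}(ii); the only subtlety to be careful with is ensuring that the auxiliary neighbor $w$ exists, which is exactly where the hypotheses $|V(G)|\ge 3$ and connectivity come in.
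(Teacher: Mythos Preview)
Your argument is correct. The swap $I' = (I\setminus\{u\})\cup\{v\}$ works exactly as you claim: since $v$'s only neighbor is $u$, and the remaining pairs in $I'$ lie in $I\setminus\{u\}$ where the only possibly new edge $uw$ is irrelevant, $I'$ is independent in $G$ with $|I'|=\alpha(G)+1$.

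The paper takes a different route: rather than attacking degree-one vertices directly, it proves the stronger statement that a connected $\alpha$-critical graph on at least three vertices has no cut vertex at all (from which $\delta(G)\ge 2$ follows, since a degree-one vertex forces its neighbor to be a cut vertex). The paper's argument fixes a cut vertex $v$, takes a maximum independent set $S$ containing $v$, and then for each component $C_i$ of $G-v$ uses $\alpha$-criticality on an edge $vx_i$ with $x_i\in C_i$ to show $\alpha(C_i)\ge |S\cap V(C_i)|+1$; summing over the $t\ge 2$ components yields $\alpha(G)>|S|$. Your approach is more direct and slightly more elementary for the stated conclusion, while the paper's argument buys the extra structural fact of $2$-connectivity, which is a classical property of $\alpha$-critical graphs in its own right.
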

\begin{proof}
It suffices to prove that $G$ has no cut vertex. Suppose otherwise that $G$ has a cut vertex $v$ and the components $G-v$ are $C_1, C_2, \ldots, C_t$ with $t\geq 2$. Observe that since $G$ is $\alpha$-critical, every vertex in $G$ belongs to a maximum independent set of $G$. Let $S$ be a maximum independent set in $G$ containing $v$. 
For each $i\in [t]$, let $x_i$ be a neighbor of $v$ in $C_i$. Since $G$ is $\alpha$-critical, $\alpha(G-vx_i)= \alpha(G)+1$, which implies that there exists an independent set of size $\alpha(G)+1$ in $G-vx_i$ containing both $v$ and $x_i$. It follows that $\alpha(C_i)\geq |S\cap V(C_i)|+1$ for every $i\in [t]$. Thus
$$\alpha(G)\geq \sum_{i\in [t]} \alpha(C_i) \geq \sum_{i\in [t]} (|S\cap V(C_i)|+1) >|S|,$$
since $t\geq 2$, contradicting that $S$ is a maximum independent set in $G$.
\end{proof}

Using Theorem \ref{thm:Lov} and Observation \ref{obs:split}, we are able to extend Theorem~\ref{thm:Lov-nomin} further. 

\begin{lemma}\label{lem:num_of_degree_3_vertex}
    Let $G$ be a connected,  $\alpha$-critical graph with deficiency {$d \geq 2$}. Then $G$ has at most $(d-1)\cdot 2^{3d^2}$ vertices of degree at least three.
\end{lemma}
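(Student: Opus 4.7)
The plan is to reduce $G$ to a smaller $\alpha$-critical graph $H$ of the same deficiency but with minimum degree at least three, by iteratively undoing the split operation of Observation~\ref{obs:split}, and then count all degree-$\geq 3$ vertices of $G$ via the bubble decomposition that this reduction induces. Since $d(G)\geq 2$ excludes $G\in\{K_2,K_3\}$, we have $|V(G)|\geq 4$ and $\delta(G)\geq 2$ by Observation~\ref{obs:critical_graph_min_degree}. While the current graph $G_t$ has a vertex $w$ of degree two with neighbors $v',v''$, Observation~\ref{obs:split}(ii) guarantees $v'v''\notin E(G_t)$ and that $w$ is the only common neighbor of $v'$ and $v''$; we therefore delete $w$ and identify $v'$ with $v''$. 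By Observation~\ref{obs:split}(i) the result is again a connected $\alpha$-critical graph of deficiency $d$ (connectivity is preserved because we contract a connected subgraph). After finitely many steps we reach a graph $H$ with no degree-two vertex, so $\delta(H)\geq 3$ by Observation~\ref{obs:critical_graph_min_degree}, and Theorem~\ref{thm:Lov} yields $|V(H)|\leq 2^{3d^2}$.

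Write $V(H)=\{h_1,\ldots,h_m\}$; each $h_i$ results from successively merging a set $V_{h_i}\subseteq V(G)$ of size $a_i\geq 1$, and a standard union-find count shows that exactly $a_i-1$ of the deleted degree-two vertices are consumed in these merges. Gathering them into $W_i$, the sets $V_{h_1},\ldots,V_{h_m}$ together with $W:=\bigcup_i W_i$ partition $V(G)$, and every $w\in W$ has $d_G(w)=2$, so every degree-$\geq 3$ vertex of $G$ lies in some $V_{h_i}$. Iterating Observation~\ref{obs:split}(ii) along the reduction shows two facts: (a) each $V_{h_i}$ is an independent set in $G$ (every reverse-split merges two non-adjacent vertices, and identifications never add edges), and (b) each $w\in W_i$ has both $G$-neighbors inside $V_{h_i}$, while each $H$-edge at $h_i$ lifts to a unique $G$-edge from $V_{h_i}$ to another bubble. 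Counting $G$-edges leaving $V_{h_i}$ in two ways therefore gives
\[
\sum_{v\in V_{h_i}} d_G(v) \;=\; 2(a_i-1) + d_H(h_i).
\]
Since $d_G(v)\geq 2$ for every $v\in V_{h_i}$, the number $x_i$ of such vertices with $d_G(v)\geq 3$ satisfies $2a_i+x_i\leq 2(a_i-1)+d_H(h_i)$, i.e., $x_i\leq d_H(h_i)-2$. Summing over $i$ and using Theorem~\ref{thm:Hajnal-max-degree} (so that $\sum_i d_H(h_i)=2|E(H)|\leq (d+1)|V(H)|$) yields
\[
\sum_{i=1}^m x_i \;\leq\; \sum_{i=1}^m\bigl(d_H(h_i)-2\bigr) \;\leq\; (d-1)\,|V(H)| \;\leq\; (d-1)\cdot 2^{3d^2},
\]
which is the desired bound on the number of degree-$\geq 3$ vertices of $G$.

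The main technical subtlety is validating the bubble decomposition, and in particular the claim that each $V_{h_i}$ remains independent in the \emph{original} graph $G$ despite the many intermediate identifications: this must be tracked through every reverse-split by combining the non-adjacency guarantee of Observation~\ref{obs:split}(ii) with the fact that identifications never introduce new edges. Once this structure is in place, the degree-sum identity and Hajnal's maximum-degree bound combine cleanly with Theorem~\ref{thm:Lov} to deliver the factor $(d-1)\cdot 2^{3d^2}$.
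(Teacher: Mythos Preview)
Your proof is correct and follows essentially the same route as the paper: reduce $G$ to a connected $\alpha$-critical graph $H$ with $\delta(H)\ge 3$ by undoing splits, apply Theorem~\ref{thm:Lov} to bound $|V(H)|$, and then show each $h_i\in V(H)$ accounts for at most $d_H(h_i)-2\le d-1$ vertices of degree $\ge 3$ in $G$ (the paper argues this last step by tracking how a single split can raise the high-degree count by at most one, you via a degree-sum identity). One bookkeeping wrinkle: your partition into $V_{h_i}$ and $W_i$ tacitly assumes every deleted degree-two vertex is an original $G$-vertex, but a vertex chosen for deletion in some $G_t$ may itself already be a merge of several $G$-vertices; the clean fix is to let $B_{h_i}\subseteq V(G)$ be the full preimage of $h_i$ (equivalently, all vertices generated from $h_i$ in the forward splitting), note that $G[B_{h_i}]$ is a tree on $|B_{h_i}|$ vertices, and then your identity reads $\sum_{v\in B_{h_i}}d_G(v)=2(|B_{h_i}|-1)+d_H(h_i)$, from which $x_i\le d_H(h_i)-2$ follows exactly as you wrote.
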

\begin{proof} Let $G$ be a connected,  $\alpha$-critical graph with deficiency {$d \geq 2$}. Then $|V(G)|\ge 4$. By Observation \ref{obs:critical_graph_min_degree}, $\delta(G)\ge 2$. 
    By Observation \ref{obs:split} and Theorem \ref{thm:Lov}, we see that every connected, $\alpha$-critical graph with deficiency $d\geq 2$ can be obtained from some connected, $\alpha$-critical  graph $H$ with   $|V(H)|\le 2^{3d^2}$   and   $\delta(H)\ge3$ by iteratively splitting a vertex. Observe that splitting a vertex $v$ in $H$ increases the number of vertices of degree at least three in $H$ by one if and only if $d_H(v)\geq 4$ and the two vertices $v',v''$ replacing $v$ each is adjacent to at least two of the neighbors of $v$ in $H$. It follows that for every vertex $v\in V(H)$ with $d_H(v)\ge4$,  the number of vertices of degree at least three obtained from iteratively splitting $v$ and its replacement vertices (if $v$ is split) is at most $d_H(v)-2$. 
    By Theorem \ref{thm:Hajnal-max-degree}, $\Delta(H)\leq d+1$.
    Therefore the number of vertices of degree at least three in $G$ is at most  $$|V(H)| \cdot (\Delta(H) -2)\le |V(H)| \cdot (d+1-2)\leq (d-1)\cdot 2^{3d^2}.\qedhere$$

\end{proof}

\begin{lemma}\label{lem:edge-bound-critical}
   Let $G$ be an $\alpha$-critical graph with $k\geq 2\ell+3 $ and deficiency $d\geq 2$. If $G$ is $(k,\ell)$-stable, then $G$ has at most $2\ell\binom{(d-1)\cdot 2^{3d^2}}{2}$ vertices of degree two.
\end{lemma}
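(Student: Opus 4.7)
The idea is to group the degree-$2$ vertices by the unordered pair of degree-$\geq 3$ endpoints of their thread. By \cref{obs:critical_graph_min_degree} we have $\delta(G)\geq 2$; let $A$ be the set of degree-$\geq 3$ vertices of $G$, so $|A|\leq m:=(d-1)\cdot 2^{3d^2}$ by \cref{lem:num_of_degree_3_vertex}. A \emph{thread} is a maximal path of consecutive degree-$2$ vertices; by \cref{obs:split}(ii) its two endpoints are distinct non-adjacent vertices of $A$. For each unordered pair $\{a,b\}\in\binom{A}{2}$, let $T(a,b)$ be the set of degree-$2$ vertices lying on threads with endpoint-pair $\{a,b\}$. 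These sets partition the degree-$2$ vertex set of $G$, so it suffices to prove
\[|T(a,b)|\leq 2\ell\quad\text{for every pair}\ \{a,b\}\in\binom{A}{2},\]
which when summed yields the claimed bound $2\ell\binom{m}{2}$.

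Fix $\{a,b\}\in\binom{A}{2}$, set $t:=|T(a,b)|$, and suppose $t\geq 2\ell+1$ for contradiction. Let $G^*:=G-T(a,b)$, $\gamma:=\alpha(G^*)$, and for each $\sigma\subseteq\{a,b\}$ let $\gamma_\sigma$ be the largest size of an independent set $I$ of $G^*$ with $I\cap\{a,b\}=\sigma$, so $\gamma=\max_\sigma\gamma_\sigma$. The thread interiors between $a$ and $b$ form vertex-disjoint paths, so decomposing any independent set of $G$ into its $G^*$-part and its thread-interior part yields
\[\alpha(G)=\max_{\sigma\subseteq\{a,b\}}\bigl(\gamma_\sigma+f_\sigma\bigr),\]
where $f_\sigma$ is the sum, over threads between $a$ and $b$, of the maximum independent set in each thread's interior subject to the endpoint omissions imposed by $\sigma$. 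A routine computation gives $f_\varnothing\geq\lceil t/2\rceil\geq\ell+1$.

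We build a deletion set $S\subseteq V(G)$ with $|S|\leq k$ and $\alpha(G-S)\leq\alpha(G)-(\ell+1)$; padding $S$ out to size $k$ with arbitrary further vertices only decreases $\alpha(G-S)$, and thus contradicts the $(k,\ell)$-stability of $G$. In \emph{Case~1}, where $\gamma_\varnothing=\gamma$ so that $\alpha(G)=\gamma+f_\varnothing$, we take $S:=T(a,b)$ when $t\leq k$ (giving $\alpha(G-S)=\gamma$ and drop $f_\varnothing\geq\ell+1$), and when $t>k$ we delete $k$ consecutive internal vertices of a longest thread; a direct path-independence calculation exploiting $k\geq 2\ell+3$ shows the residual thread contribution drops by at least $\ell+1$. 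In \emph{Case~2}, where $\gamma_\varnothing<\gamma$, we pick the smallest nonempty $\tau\subseteq\{a,b\}$ with $\alpha(G^*-\tau)\leq\gamma-1$ (feasible because $\tau=\{a,b\}$ always works in Case~2), choose $S'\subseteq T(a,b)$ of size at most $k-|\tau|$ that maximally reduces the residual thread contribution, and set $S:=S'\cup\tau$; a sub-case-by-sub-case computation then yields $\alpha(G-S)\leq\alpha(G)-(\ell+1)$.

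The main difficulty is the bookkeeping in Case~2: the correct choice of $\tau\in\{\{a\},\{b\},\{a,b\}\}$ depends on which of $\gamma_{\{a\}},\gamma_{\{b\}},\gamma_{\{a,b\}}$ realizes $\gamma$, and $S'$ must simultaneously fit within $k-|\tau|$ vertices and reduce the residual thread contribution by the right amount. The hypothesis $k\geq 2\ell+3$ is precisely what furnishes the required slack in every sub-case, including when $T(a,b)$ is distributed over several parallel threads between $a$ and $b$ (handled via the thread-wise additivity of $f_\sigma$).
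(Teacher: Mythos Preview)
Your plan diverges from the paper's argument in a crucial way, and as written it has a genuine gap.

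The paper does \emph{not} attempt to bound $|T(a,b)|$ directly via a case analysis on the independence structure of $G^*=G-T(a,b)$. Instead it first proves, using the Lov\'asz splitting structure behind \cref{lem:num_of_degree_3_vertex}, that between any two degree-$\ge 3$ vertices there is \emph{at most one} thread (so your $T(a,b)$ is the interior of a single path). With that in hand, Pigeonhole gives a single thread with at least $2\ell+1$ internal degree-$2$ vertices, and the contradiction is a three-line computation: delete the $2\ell+3$ consecutive path vertices $v_1,\dots,v_{2\ell+3}$ (with $v_2,\dots,v_{2\ell+2}$ of degree~$2$), use $(k,\ell)$-stability to get an independent set $I$ of size $\alpha(G)-\ell$ in the remainder, and observe that $I\cup\{v_2,v_4,\dots,v_{2\ell+2}\}$ is independent of size $\alpha(G)+1$. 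No decomposition $\alpha(G)=\max_\sigma(\gamma_\sigma+f_\sigma)$ and no sub-case analysis is needed.

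Your plan, by contrast, allows multiple parallel threads between $a$ and $b$ and tries to handle them through the $\gamma_\sigma$ bookkeeping. This creates at least one concrete gap: in Case~1 with $t>k$ you propose to ``delete $k$ consecutive internal vertices of a longest thread,'' but nothing you have cited rules out $T(a,b)$ being spread over many short parallel threads, none of which has $k$ internal vertices, so this deletion is not available. (Observation~\ref{obs:split}(ii) only forbids two common \emph{neighbours} of degree~$2$; it does not by itself forbid two parallel threads of length $\ge 2$, nor does it show the two thread-endpoints are distinct when the thread has four or more internal vertices.) Your Case~2 is likewise only asserted (``a sub-case-by-sub-case computation then yields\dots''), and the interaction between $\gamma_\sigma$ and $f_\sigma$ across several threads is delicate enough that the claim that $k\ge 2\ell+3$ ``furnishes the required slack in every sub-case'' needs an actual argument.

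The fix is to adopt the paper's first step: use the subdivision/splitting description of $G$ to show there is at most one thread per endpoint pair. Once you know $T(a,b)$ is a single path interior, your Case~1 with $t\le k$ already suffices (or, equivalently, the paper's short argument applies), and Cases~1 ($t>k$) and~2 become unnecessary.
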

\begin{proof} 
    Let $G$ be a connected, $\alpha$-critical graph with $k\geq 2\ell+3\geq 3$ and deficiency $d\geq 2$. Again by Observation \ref{obs:critical_graph_min_degree}, we have $\delta(G)\ge2$. 
    By Lemma \ref{lem:num_of_degree_3_vertex}, $G$ has at most $(d-1)\cdot 2^{3d^2}$ vertices of degree at least $3$.  Recall that $G$ can be obtained from some connected, $\alpha$-critical  graph   $H_0$ with $|V(H_0)|\le 2^{3d^2}$   and   $\delta(H_0)\ge3$ by iteratively splitting a vertex, and each graph in the process is $\alpha$-critical.

    We claim that for any two vertices $x,y$ of degree at least three in $G$, there exists at most one path between $x$ and $y$ in $G$ such that each internal vertex (if it exists) in the path has degree exactly two. Note that $H_0$ satisfies the claim since  $\delta(H_0)\ge3$. By Observation \ref{obs:split} (ii), if an $\alpha$-critical graph  satisfies the claim, then the graph obtained from splitting a vertex in it also satisfies the claim (as they are both $\alpha$-critical). Thus the claim holds for $G$. It then follows that 
  $G$ can be obtained from some connected, $\alpha$-critical  graph $H$ with   $|V(H)|\le (d-1)\cdot 2^{3d^2}$   and   $\delta(H)\ge3$  by iteratively subdividing the edges of $H$. Then  $|E(H)|\leq \binom{(d-1) \cdot 2^{3d^2}}{2}$. 

Suppose for contradiction that $G$ is $(k,\ell)$-stable and has more than  $2\ell\binom{(d-1)\cdot 2^{3d^2}}{2}$ vertices of degree two.   By the Pigeonhole Principle,  some edge of  $H$ must be subdivided at least $(2\ell+1)$ times. That is, there exists an induced path $P$ in $G$ with vertices $v_1,  v_2, \ldots,  v_{2\ell+3}$  in order  such that $d_G(v_i)=2$ for each $i$ satisfying $2\leq i\leq 2\ell+2$. Since $G$ is $(k,\ell)$-stable with $k\geq 2\ell+3$, we see that 
    $$\alpha(G-V(P)) \geq \alpha(G)-\ell.$$
    But then  there exists an independent set $S$ of size   $\alpha(G)-\ell$ in $G-V(P)$; and $S\cup \{v_2, v_4, \cdots ,v_{2\ell+2}\}$ is  an independent set of size   $\alpha(G)+1$ in $G$, a contradiction.\end{proof}

We are now ready to prove Theorem \ref{thm:2l+3_bound}, which we restate it here for convenience.
\qtightwide*

\begin{proof}
Let $G$, $k$, $\ell$, $q$ and $n$  be  as given in the statement. 
Let $G'$ be an $\alpha$-critical spanning subgraph of $G$ such that $\alpha(G') = \alpha(G)$. By Observation \ref{obs:critical_graph_min_degree}, $\delta(G)\geq 2$. Observe that $G'$ is also $q$-tight $(k,\ell)$-stable. 
By Observation~\ref{obs2} and the fact that $k\ge 2\ell+3$ and $q\ge0$,    
\[2\le 2q+2 \le k-2\ell+2q -1\le d(G')\le k-2\ell+2q.\]
Let $C_1, \ldots, C_t$ be the components of $G'$, where $t\ge1$ is an integer. For each $i\in [t]$, let $n_i:=|V(C_i)|$.
Observe that for each $i\in [t]$, $C_i$ is  $(k,\ell)$-stable as $G'$ is  $(k,\ell)$-stable. For each $i\in [t]$, let $A_i$ be the set of vertices of degree two in $C_i$. 
    If $t=1$, then $|V(G')-A_1|\le (d(G')-1)\cdot 2^{3d(G')^2}$ by Lemma \ref{lem:num_of_degree_3_vertex} and $|A_1| \leq 2\ell\binom{(d(G')-1)\cdot 2^{3d(G')^2}}{2}$ by Lemma \ref{lem:edge-bound-critical}. It follows that 
    \begin{align}
        n=|A_1|+|V(G')-A_1| & \le 2\ell\binom{(d(G')-1)\cdot 2^{3d(G')^2}}{2}+ (d(G')-1)\cdot 2^{3d(G')^2} \notag\\
                        & \le (\ell+1) (d(G')-1)^2 2^{6d(G')^2}\notag\\
                        &\leq (\ell+1) (k+2q-2\ell-1)^2 2^{6(k-2\ell+2q)^2} \tag{$*$}\label{eq:vtx_bound_1.9},
    \end{align}
    as desired. We may assume that $t\geq 2$. Observe that 

    $$\floor*{\frac{n-k+1}{2}}+\ell-q= \alpha(G') = \dss_{i=1}^t \alpha(C_i) \leq \dss_{i=1}^t \lp\floor*{\frac{n_i-k+1}{2}}+\ell \rp$$

    We claim that $C_i$ is  $q_i$-tight $(k,\ell)$-stable for each $i\in [t]$, where $q_i\le q$.
    Otherwise, without loss of generality, suppose $C_1$ is not $q_1$-tight $(k,\ell)$-stable for each $q_1\le q$. Then $\alpha(C_1)  \leq \frac{n_1-k+1}{2}+\ell-(q+1)$ and so 
    \begin{align*}\alpha(G')=\dss_{i=1}^t \alpha(C_i) & \leq \frac{n_1-k+1}{2}+\ell-(q+1) + \dss_{i=2}^t \lp\frac{n_i-k+1}{2}+\ell \rp\\
    & \leq \frac{n-k+1}{2} + \ell-(q+1) - \frac{(t-1)(k-1-2\ell)}{2}\\
    &\leq \frac{n-k+1}{2} +\ell-q -1-  \frac{(t-1)(2\ell+3-1-2\ell)}{2} \\
       &\leq \frac{n-k+1}{2} +\ell-q -1- (t-1)\\
    & < \floor*{\frac{n-k+1}{2}}+\ell-q\\
    &=\alpha(G'),
    \end{align*}
which is impossible. Therefore, $C_i$ is  $q_i$-tight $(k,\ell)$-stable for each $i\in [t]$ where $q_i\le q$. By Observation~\ref{obs2},  
 \[2\le d(C_i)  \le k-2\ell+2q_i\le k-2\ell+2q.\]
    Hence for each $i\in[t]$, similar to \eqref{eq:vtx_bound_1.9}, by Lemma~\ref{lem:num_of_degree_3_vertex} and Lemma~\ref{lem:edge-bound-critical}, 
    $$|V(C_i)|\leq |A_i|+|V(C_i)-A_i|\leq (\ell+1) (k+2q-2\ell-1)^2 2^{6(k-2\ell+2q)^2}.$$
Since $G'$ is $q$-tight $(k,\ell)$-stable, we have
    \begin{align*}\frac{n-k}{2}+\ell-q 
    \leq \floor*{\frac{n-k+1}{2}}+\ell-q 
    = \alpha(G') &=\dss_{i=1}^t \alpha(C_i) \\
    & \leq \dss_{i=1}^t \lp\floor*{\frac{n_i-k+1}{2}}+\ell \rp \\
    &\leq \dss_{i=1}^t \lp\frac{n_i-k+1}{2}+\ell \rp\\
    &\leq \frac{n-k}{2}- \frac{(t-1)(k-1)}{2}+t\ell+\frac12.
    \end{align*}
This, together with the fact that $k\geq 2\ell+3$, implies that
$$t\leq 1+\frac{2q+1}{k-2\ell-1} \leq 1+ \frac{2q+1}{2}\leq q+\frac{3}{2},$$
 as $k\geq 2\ell+3$. Since $t$ is an integer, we then have $t\leq q+1$. It follows that
\begin{align*}n&=\dss_{i=1}^t  |V(C_i)|\\
&\leq (q+1)(\ell+1) (k+2q-2\ell-1)^2 2^{6(k-2\ell+2q)^2}.
\end{align*}
This completes the proof of Theorem~\ref{thm:2l+3_bound}.  
\end{proof}

\section{Concluding Remarks}

In our proofs of Theorem \ref{thm:q-tight(k,l)} and Theorem \ref{thm:2l+3_bound}, we heavily use Theorem \ref{thm:Lov}, which bound the number of vertices in a connected, $\alpha$-critical graph with deficiency $d$ and minimum degree at least three. Lov\'asz remarked in \cite{Lov19} that the upper bounds in Theorem \ref{thm:Lov-nomin} and Theorem \ref{thm:Lov} are probably very rough. An example of Sur\'anyi~\cite{Sur73} showed that there exist $\alpha$-critical graphs with deficiency $d$ and $\Omega(d^2)$   vertices with degree at least three. It would be interesting to find a better upper bound for Theorem \ref{thm:Lov}.

In another direction, Dong and Wu~\cite{DW22} constructed a sequence of $n$-vertex $(3,0)$-stable graphs with independence number $n/2-O(\sqrt{n})$. Alon \cite{Alo21} extended the above result by showing that for every $k>\ell\geq 0$, there exists a sequence of $n$-vertex $(k,\ell)$-stable graphs with independence number $n/2-o(n)$.    Theorem \ref{thm:q-tight(k,l)} states that for $k\geq 3\ell+3\geq 3$, there exists no constant $c$ such that there are infinite $(k,\ell)$-stable graphs $G$ with $\alpha(G)\geq |V(G)|/2-c$. For nonnegative integers $k, \ell$ with  $k\geq 3\ell+3$, let  $f(n)$ be a function of $n$ such that there exists a sequence of $n$-vertex $(k,\ell)$-stable graphs $G$ with $n\to\infty$ and $\alpha(G)\geq n/2-f(n)$. It would be interesting to know whether $f(n) = \Omega(\sqrt{n})$.

\end{document}